\documentclass[pdflatex,sn-mathphys-num]{sn-jnl}

\newcommand{\can}{\operatorname{can}}
\newcommand{\wq}{\mathbf{w}}
\newcommand{\uq}{\mathbf{u}}
\newcommand{\vq}{\mathbf{v}}
\newcommand{\aq}{\mathbf{a}}
\newcommand{\sq}{\mathbf{s}}
\newcommand{\eq}{\mathbf{e}}
\newcommand{\fq}{\mathbf{f}}

\usepackage[T1]{fontenc}
\usepackage[utf8]{inputenc}
\usepackage{bbm}
\usepackage{amsmath, amsthm, amssymb, amsfonts}
\usepackage{hyperref}
\usepackage{enumitem}
\usepackage{mathrsfs}
\usepackage{tikz}
\usepackage{graphicx}

\newtheorem{theorem}{Theorem}[section]
\newtheorem{lemma}[theorem]{Lemma}
\newtheorem{proposition}[theorem]{Proposition}
\newtheorem{corollary}[theorem]{Corollary}
\theoremstyle{definition}
\newtheorem{definition}[theorem]{Definition}

\theoremstyle{remark}
\newtheorem{remark}[theorem]{Remark}

\setlist[itemize]{
  topsep=6pt,
  partopsep=0pt,
  itemsep=2pt,
  parsep=0pt,
  labelindent=\parindent,
  leftmargin=*,
  align=left
}

\setlist[enumerate,1]{
  topsep=6pt,
  partopsep=0pt,
  itemsep=2pt,
  parsep=0pt,
  labelindent=\parindent,
  leftmargin=*,
  align=right
}

\begin{document}

\title[Zero Cancellation and Equation Structure in Kiselman's Semigroup]{Zero Cancellation and Equation Structure in Kiselman's Semigroup}

\author{\fnm{Luka} \sur{Andrenšek}}

\abstract{
   \normalfont\unboldmath
   We investigate equations in Kiselman's semigroup $K_n$, generated by $a_1, \dots, a_n$. 
   Let $f$ denote the zero element of $K_n$.
   We prove that if $y \in K_n$ lies in the subsemigroup generated by $a_2, \dots, a_n$, then $x y = f$ implies $x = f$.
   In contrast, the equation $x a_1 = f$ admits non-trivial solutions.
   We describe the solution set of this equation, show that its cardinality is $1 + |K_{n-1}|$, and study its algebraic structure.
   Moreover, we show that $|K_{2n+1}|$ is even, whereas $|K_{2n}|$ is odd.
}

\keywords{Kiselman's semigroup, canonical words, zero element, cancellation, equation, parity}

\maketitle

\section{Introduction}

Let $n$ be a positive integer.
The \emph{Kiselman's semigroup} $K_n$ is defined by
\begin{equation*}
    K_n = \langle a_1, a_2, \dots, a_n \mid a_i^2 = a_i, a_i a_j a_i = a_j a_i a_j = a_i a_j, 1 \leq j < i \leq n \rangle.
\end{equation*}
We denote by $e$ the unit element of $K_n$.

This family of semigroups arose as a natural 
generalisation of a semigroup of operators in convex analysis introduced 
by Kiselman~\cite{kiselman}.
Fundamental structural properties of $K_n$ were established by 
Kudryavtseva and Mazorchuk~\cite{kudryavtseva}. 
The semigroup arises in graph dynamics, as discussed by Collina and D'Andrea~\cite{collina},
and exhibits rich combinatorial structure.

A more general family of monoids was introduced by Ganyushkin and Mazorchuk~\cite{ganyushkin11}.
They defined the \emph{Hecke--Kiselman monoid} $\mathrm{HK}_{\Theta}$ associated to a finite simple digraph $\Theta$.

Identities in certain families of these monoids were studied by Ashikhmin, Volkov, and Zhang~\cite{Ashikhmin15}, who solved the corresponding finite basis problem, and
further semigroup identities were investigated by Wiertel~\cite{wiertel24}.
Normal forms for two particular families were described by 
Lebed~\cite{Lebed2025}, yielding solutions to the word problem in 
those cases, while normal forms for another family were established by Aragona and D'Andrea~\cite{aragona20}.

Conditions for finiteness were analysed by 
Aragona and D'Andrea~\cite{aragona13}, who established necessary conditions on 
$\Theta$ and provided a full
classification of simple digraphs on at most four vertices for which 
$\mathrm{HK}_{\Theta}$ is finite.
The finiteness of $K_n$ was proved in~\cite[Theorem~3]{kudryavtseva}, and
D'Andrea and Stella~\cite{dandrea23} showed
that the cardinality of $K_n$ grows
double-exponentially as $n$ tends to $\infty$. 

Effective representations of Hecke--Kiselman monoids were discussed by Forsberg~\cite{forsberg2017}.
We determined all endomorphisms of $K_n$ in~\cite{andrensek}.

Representations, properties, and structure of
the corresponding Hecke--Kiselman algebras
were studied in~\cite{Mecel2019, okninski20, okninski21, wiertel22, wiertel23}.  

In~\cite[Section~10]{kudryavtseva}, deletion properties were established in $K_n$.
The present paper continues the study of these deletion properties, revealing further combinatorial consequences. 
In particular, we investigate equations involving the zero element $f$. We analyse equations of the form 
$xy = f$ under various restrictions on $y \in K_n$.
The paper relies heavily on the concept of canonical words, which was introduced in~\cite[Section~3]{kudryavtseva}.

\section{Main Results}

In Section~\ref{IDE}, we recall that $K_n$ has a zero element, which we denote by $f$.
Let $\langle a_2, a_3, \dots, a_n \rangle$ be the subsemigroup of $K_n$ generated by $\{a_2, a_3, \dots, a_n\}$.
We prove the Zero Cancellation Theorem, which states the following.
\[
\text{If } y \in \langle a_2, a_3, \dots, a_n \rangle \text{ and } xy = f, \text{ then } x = f.
\]

In contrast, the equation $x a_1 = f$ admits non-trivial solutions. We show that
\[
\bigl|\{x \in K_n \mid x a_1 = f\}\bigr| = 1 + |K_{n-1}|
\]
and give an explicit description of its solution set.
We further prove that this set is a subsemigroup of $K_n$ and derive an explicit multiplication rule.

Using ideas from the proofs of these results, we show that
\[
|K_{2n+1}| \text{ is even}, \qquad |K_{2n}| \text{ is odd}.
\]

The paper proceeds as follows.
In Section~\ref{Prelim}, we summarise the main definitions and results on $K_n$. 
In Section~\ref{ZCT}, we state and prove the Zero Cancellation Theorem, and discuss some of its consequences. 
In Section~\ref{TE}, we analyse solutions to the equation $x a_1 = f$, determine the cardinality of the solution set, give its description, and study its algebraic properties.
The paper concludes with Section~\ref{PAR}, in which we determine the parity of $|K_n|$.

\section{Preliminaries}\label{Prelim}

For a set $X$, we denote by 
$W(X)$ the set of all finite words over $X$ and we denote by $\eq$ the empty word.
The set $W(X)$ equipped with the binary operation of \emph{concatenation} becomes a monoid with unit element $\eq$.
We denote $A = \{\aq_1, \aq_2, \dots, \aq_n\}$.
Define the \emph{canonical epimorphism} $\varphi : W(A) \to K_n$ by 
\[
\varphi(\aq_{i_1} \aq_{i_2} \dots \aq_{i_k}) = a_{i_1} a_{i_2} \dots a_{i_k},
\]
for all $k \geq 1$ and $i_1, i_2, \dots, i_k \in \{1, 2, \dots, n\}$, and $\varphi(\eq) = e$.
In this paper, elements of $W(A)$ are written in boldface, for example $\uq, \vq, \wq, \aq_i$, whereas elements of $K_n$ are written in standard mathematical italic, for example $x, y, z, a_i$.

We recall the following lemma from~\cite[Lemma~1]{kudryavtseva}.
\begin{lemma}\label{lemma:shorten}
    The following statements hold.
    \begin{enumerate}[label=(\roman*), ref=\roman*]
        \item\label{lemma:shorten:i} Let $i \in \{1,2, \dots, n\}$
        and $\wq \in W(\{\aq_1, \dots, \aq_{i - 1}\})$. Then we
        have $a_i \varphi(\wq) a_i = a_i \varphi(\wq)$.
        \item\label{lemma:shorten:ii} Let $i \in \{1,2, \dots, n\}$ and $\wq \in W(\{\aq_{i + 1}, \dots, \aq_{n}\})$. Then we have
        $a_i \varphi(\wq) a_i = \varphi(\wq) a_i$.
    \end{enumerate}
\end{lemma}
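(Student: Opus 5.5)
Both parts are proved by induction on the length of $\wq$, using only the defining relations $a_i^2 = a_i$ and $a_i a_j a_i = a_j a_i a_j = a_i a_j$ for $j < i$. The idea is to strengthen each claim so that the induction step becomes a single application of one relation.

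For part~(\ref{lemma:shorten:i}), let $i$ be fixed and $\wq \in W(\{\aq_1, \dots, \aq_{i-1}\})$. We prove the stronger statement that $a_i \varphi(\wq) a_i = a_i \varphi(\wq)$ and, equivalently, that $a_i \varphi(\wq)$ absorbs $a_i$ on the right.
\begin{itemize}
    \item If $\wq = \eq$, the claim is $a_i a_i = a_i$, which is idempotency.
    \item Otherwise write $\wq = \uq \aq_j$ with $j < i$, and put $x = a_i \varphi(\uq)$. By the induction hypothesis, $x a_i = x$.
    \item Then
    \[
    a_i \varphi(\wq) a_i = x a_j a_i = (x a_i) a_j a_i = x (a_i a_j a_i) = x a_i a_j = x a_j = a_i \varphi(\wq).
    \]
    Here we used $x = x a_i$ twice and the relation $a_i a_j a_i = a_i a_j$, which holds since $j < i$.
\end{itemize}

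For part~(\ref{lemma:shorten:ii}), the argument is symmetric, absorbing from the left. Let $\wq \in W(\{\aq_{i+1}, \dots, \aq_n\})$; we show $a_i \varphi(\wq) a_i = \varphi(\wq) a_i$ by induction on the length of $\wq$.
\begin{itemize}
    \item If $\wq = \eq$, the claim is again idempotency.
    \item Otherwise write $\wq = \aq_j \uq$ with $j > i$, and put $y = \varphi(\uq) a_i$. By the induction hypothesis, $a_i y = y$.
    \item Then
    \[
    a_i a_j y = a_i a_j a_i y = (a_j a_i) y = a_j (a_i y) = a_j y.
    \]
    Here we used $y = a_i y$ and the relation $a_i a_j a_i = a_j a_i$, valid for $i < j$ (it is the relation $a_k a_l a_k = a_k a_l$ read with $k = j$, $l = i$ together with $a_l a_k a_l = a_k a_l$). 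This gives $a_i \varphi(\wq) a_i = \varphi(\wq) a_i$.
\end{itemize}

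The only step needing care is the bookkeeping of indices, namely checking which of the two relations applies in each step. In (i) the outer letter $a_i$ is the larger index, so we use $a_i a_j a_i = a_i a_j$ with $j < i$. In (ii) it is the smaller index, so we use $a_i a_j a_i = a_j a_i$ with $i < j$.
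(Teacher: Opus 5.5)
Your proof is correct. In (i) you peel off the last letter and use $x a_i = x$ with $a_i a_j a_i = a_i a_j$ ($j<i$); in (ii) you peel off the first letter and use $a_i y = y$ with $a_i a_j a_i = a_j a_i$ ($i<j$). Both inductions close as written, though the announced ``strengthening'' is only a restatement of the claim, which is all the induction needs.

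The paper gives no proof of its own here; it imports the lemma from \cite[Lemma~1]{kudryavtseva}. Your induction on the length of $\wq$ from the defining relations is the natural argument for it, and I believe it is essentially how that source proves it, though I have not checked the exact steps there.
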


We define the \emph{length} of a word as the map $l : W(A) \to \mathbb{N}_0$ given in the natural way.
In~\cite[Corollary~2]{kudryavtseva}, the authors proved that there is an upper
bound on the number of occurrences of the letters $\aq_i$ in words of shortest possible
length that represent the same element of $K_n$.
They later showed that these words correspond to \emph{canonical words} (see Section~\ref{CAN}).
\begin{proposition}\label{prop:word_bound}
    Let $x \in K_n$ and let $\wq \in W(A)$ be a word of shortest
    possible length such that $x = \varphi(\wq)$. Then we have
    \begin{enumerate}[label=(\roman*), ref = \roman*]
        \item\label{prop:word_bound:i} For $i \leq \lceil \frac{n}{2} \rceil$, the number of occurrences of $\aq_i$ in $\wq$ is less than or equal to $2^{i - 1}$.
        \item\label{prop:word_bound:ii} For $i \geq \lceil \frac{n + 1}{2} \rceil$, the number of occurrences of $\aq_i$ in $\wq$ is less than or equal to $2^{n - i}$.
    \end{enumerate}
\end{proposition}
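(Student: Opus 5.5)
We argue directly from Lemma~\ref{lemma:shorten}. The idea is that in a shortest word, any two consecutive occurrences of $\aq_i$ must be separated both by a letter of smaller index and by a letter of larger index. Counting the gaps then gives a recursive bound, which solves to the stated powers of two. Fix $x$ and a shortest word $\wq$ with $\varphi(\wq) = x$, and write $N_i$ for the number of occurrences of $\aq_i$ in $\wq$.

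\emph{Step 1 (separation).} Suppose $\wq = \wq_1 \aq_i \uq \aq_i \wq_2$, where the two displayed letters are consecutive occurrences of $\aq_i$, so that $\uq$ contains no $\aq_i$.
\begin{itemize}
\item If $\uq \in W(\{\aq_1,\dots,\aq_{i-1}\})$, then Lemma~\ref{lemma:shorten}(\ref{lemma:shorten:i}) gives $\varphi(\aq_i\uq\aq_i)=\varphi(\aq_i\uq)$.
\item If $\uq \in W(\{\aq_{i+1},\dots,\aq_n\})$, then Lemma~\ref{lemma:shorten}(\ref{lemma:shorten:ii}) gives $\varphi(\aq_i\uq\aq_i)=\varphi(\uq\aq_i)$.
\end{itemize}
The case $\uq=\eq$ falls under either item, and amounts to $a_i^2=a_i$. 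In each case, replacing the factor yields a strictly shorter word representing $x$, a contradiction. Hence $\uq$ contains at least one letter $\aq_j$ with $j<i$ and at least one letter $\aq_k$ with $k>i$.

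\emph{Step 2 (counting).} The $N_i$ occurrences of $\aq_i$ create $N_i-1$ pairwise disjoint gaps between consecutive occurrences. By Step~1, each gap contains a letter of index $<i$ at a position belonging to no other gap. Therefore
\[
N_i \le 1 + \sum_{j<i} N_j .
\]
Symmetrically, each gap contains a letter of index $>i$, so
\[
N_i \le 1 + \sum_{k>i} N_k .
\]

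\emph{Step 3 (induction).} For $i=1$ the first inequality reads $N_1\le 1 = 2^{0}$, since no letter has smaller index. Assuming $N_j\le 2^{j-1}$ for all $j<i$, we get
\[
N_i \le 1+\sum_{j<i}2^{j-1} = 2^{i-1}.
\]
Downward induction from $i=n$ with the second inequality gives $N_i \le 2^{n-i}$ in the same way. In fact both bounds hold for every $i$, which in particular yields (\ref{prop:word_bound:i}) and (\ref{prop:word_bound:ii}). The thresholds $\lceil n/2\rceil$ only indicate which of the two bounds is the sharper one.

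The only delicate point is Step~1, specifically checking that the replacement really shortens the word. This holds because one occurrence of $\aq_i$ is deleted while the rest of $\wq$ is untouched. Step~2 also requires the gaps to be disjoint, so that distinct gaps are charged to distinct letters. The recursion in Step~3 is routine.
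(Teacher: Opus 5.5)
Your proof is correct and complete. Minimality together with Lemma~\ref{lemma:shorten} forces a letter of smaller index and a letter of larger index between consecutive occurrences of $\aq_i$. Charging the disjoint gaps to those letters gives $N_i \le 1+\sum_{j<i}N_j$ and $N_i \le 1+\sum_{k>i}N_k$, and these recursions solve to $2^{i-1}$ and $2^{n-i}$.

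The paper does not prove this proposition itself; it imports it from~\cite[Corollary~2]{kudryavtseva}. My recollection is that the source derives it from the same shortening lemma in essentially this way, so your argument appears to follow the intended route. As you note, it also yields both bounds for every $i$.
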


\subsection{Canonical Words}\label{CAN}

Let $\wq \in W(A)$ be a word and write $\wq = \aq_{i_1} \aq_{i_2} \dots \aq_{i_k}$. A word of the form 
$\uq = \aq_{i_j} \aq_{i_{j+1}} \dots \aq_{i_{j + l}}$, where 
$j \geq 1$ and $j + l \leq k$, is called a 
\emph{subword} of $\wq$. 

\begin{definition}
    We call a word $\wq \in W(A)$ \emph{canonical} over $A$, or simply
    \emph{canonical}, if for every $i \in \{1, 2, \dots, n\}$ such that $\aq_i \uq \aq_i$ is a subword
    of $\wq$ for some $\uq \in W(A)$, there exist $j > i$ and $k < i$ such that $\aq_j$ and $\aq_k$ occur in $\uq$. 

    Furthermore, for a word $\wq \in W(A)$, we define the set
    $\overline{\wq} = \{\vq \in W(A) \mid \varphi(\vq) = \varphi(\wq)\} = \varphi^{-1}
    (\{\varphi(\wq)\})$.
\end{definition}
Observe that if $\vq$ is a subword of a canonical word, then 
$\vq$ is also canonical.
In~\cite[Theorem~6]{kudryavtseva}, the following theorem was proved.
\begin{theorem}\label{theorem:1}
    The following statements hold.
    \begin{enumerate}[label=(\roman*), ref=\roman*]
        \item\label{theorem:1:i} For every $\wq \in W(A)$, the set $\overline{\wq}$ contains a unique element of minimal length.
        \item\label{theorem:1:ii} A word $\vq \in \overline{\wq}$ is the unique element of minimal length if and only if $\vq$ is canonical over $A$. 
    \end{enumerate}
\end{theorem}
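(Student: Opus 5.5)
The plan is to read the two identities of Lemma~\ref{lemma:shorten} as length-decreasing rewriting rules on $W(A)$. Then use Newman's lemma to get a unique normal form in each class $\overline{\wq}$, and finally identify these normal forms with the canonical words.

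\textbf{The rewriting system.} For a subword $\aq_i\uq\aq_i$ of a word, where $\uq$ contains no $\aq_i$, we allow two reductions:
\begin{itemize}
\item if $\uq$ contains no letter $\aq_j$ with $j>i$, replace $\aq_i\uq\aq_i$ by $\aq_i\uq$ (Lemma~\ref{lemma:shorten}(\ref{lemma:shorten:i}));
\item if $\uq$ contains no letter $\aq_k$ with $k<i$, replace $\aq_i\uq\aq_i$ by $\uq\aq_i$ (Lemma~\ref{lemma:shorten}(\ref{lemma:shorten:ii})).
\end{itemize}
The case $\uq=\eq$ covers $a_i^2=a_i$. Restricting to $\uq$ without $\aq_i$ loses nothing: if the letters strictly between two occurrences of $\aq_i$ violate the canonical condition, so do those between two consecutive occurrences. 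Hence a word is irreducible exactly when it is canonical.

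Each reduction preserves $\varphi$ and shortens the word. So the system terminates, and every word reduces to a canonical word in its class. In particular any word of minimal length in $\overline{\wq}$ is canonical, which gives the ``only if'' direction of (\ref{theorem:1:ii}).

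\textbf{Local confluence.} By Newman's lemma it suffices to show local confluence. Then every word has a unique normal form $N(\wq)$. Consider two reductions acting on the same word. If the windows $\aq_i\uq\aq_i$ and $\aq_{i'}\uq'\aq_{i'}$ are disjoint or merely nested without sharing a deleted letter, the two deletions commute. One must still check that after one deletion the other window still satisfies its side condition; this holds because deleting a letter only shrinks the set of letters in between.

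The genuine overlaps are:
\begin{itemize}
\item $i=i'$ with three occurrences $\aq_i\uq_1\aq_i\uq_2\aq_i$;
\item the same pair of occurrences reduced by both rules, where both deletions yield $\uq\aq_i$ versus $\aq_i\uq$;
\item a deletion of an $\aq_i$ that lies inside the window of some $\aq_{i'}$.
\end{itemize}
In the second case $\uq$ contains only the letter range $\{i\}$ and no $\aq_i$, so $\uq=\eq$ and the results coincide. The other cases are a finite case analysis using the same ``shrinking windows'' observation, possibly with one extra reduction step to rejoin. I expect this verification to be the main obstacle, and I would organise it by the relative position of the two windows.

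\textbf{Compatibility with the relations.} The normal form must be an invariant of $\varphi$. Every defining relation reduces to a common word:
\begin{itemize}
\item $\aq_i\aq_i\to\aq_i$;
\item for $j<i$, $\aq_i\aq_j\aq_i\to\aq_i\aq_j$ by the first rule;
\item for $j<i$, $\aq_j\aq_i\aq_j\to\aq_i\aq_j$ by the second rule.
\end{itemize}
Reductions are compatible with left and right multiplication by words. Therefore if $\vq$ is obtained from $\wq$ by one application of a relation, then $\wq$ and $\vq$ have a common descendant, so $N(\wq)=N(\vq)$. Since $\varphi(\vq)=\varphi(\wq)$ means $\vq$ and $\wq$ are connected by such steps, $N$ is constant on $\overline{\wq}$.

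\textbf{Conclusion.} Two distinct canonical words are their own normal forms, so they lie in different classes. Hence each $\overline{\wq}$ contains exactly one canonical word. Every word in $\overline{\wq}$ reduces to it without increasing length, so it is the unique element of minimal length. This gives (\ref{theorem:1:i}) and the ``if'' direction of (\ref{theorem:1:ii}).
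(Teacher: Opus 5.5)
Your proposal is correct and follows essentially the route the paper relies on. The paper cites Kudryavtseva--Mazorchuk for this theorem, and Proposition~\ref{prop:basic}(\ref{prop:basic:iv}) identifies $\can(\wq)$ as the normal form of the length-decreasing, confluent relation $\to$; your two rules are exactly $\to$, and your argument that the defining relations are joinable makes this normal form an invariant of $\overline{\wq}$. The local-confluence check you defer does go through: deletions at different positions commute, and the only real overlap, $\aq_i\uq_1\aq_i\uq_2\aq_i$ with both windows reducible, rejoins after at most one further step on each side for every combination of the two rules.
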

For a word $\wq \in W(A)$, we call the unique word of minimal length in $\overline{\wq}$ the \emph{canonical form} of $\wq$ and we denote it by $\can(\wq)$.
Since $\can(\wq)$ is the word of minimal length such that $\varphi(\wq) = \varphi(\can(\wq))$,~Proposition~\ref{prop:word_bound} may be applied to $\can(\wq)$ for any $\wq \in W(A)$.

\begin{definition}\label{definition11}
    We define the binary relation $\to$ on $W(A)$ as follows. For $\wq, \wq' \in W(A)$, we write $\wq \to \wq'$ if and only if 
    there exist $i \in \{1, 2, \dots, n\}$ and words $\wq_1, \wq_2, \sq \in W(A)$ such that $\wq = \wq_1 \aq_i \sq \aq_i \wq_2$ and
    one of the following holds:
    \begin{enumerate}
        \item $\wq' = \wq_1 \aq_i \sq \wq_2$ and $\sq \in W(\{\aq_1, \aq_2, \dots, \aq_{i-1}\})$,
        \item $\wq' = \wq_1 \sq \aq_i \wq_2$ and $\sq \in W(\{\aq_{i+1}, \aq_{i+2}, \dots, \aq_n\})$.
    \end{enumerate}
\end{definition}

We say that $\wq'$ is obtained from $\wq$ by a 
\emph{right deletion} if $\wq \to \wq'$ and the first condition in Definition~\ref{definition11} holds.
Similarly, we say that $\wq'$ is obtained from $\wq$ by a 
\emph{left deletion} if $\wq \to \wq'$ and the second condition in Definition~\ref{definition11} holds.
Furthermore, applying Lemma~\ref{lemma:shorten}~(\ref{lemma:shorten:i}) corresponds to using a right deletion, and applying Lemma~\ref{lemma:shorten}~(\ref{lemma:shorten:ii})
corresponds to using a left deletion.

For $k \geq 1$, define the binary relation $\overset{k}{\to}$ as follows. 
For words $\wq, \wq' \in W(A)$, we write $\wq \overset{k}{\to} \wq'$ if and only if there exist words 
$\vq_1, \dots, \vq_{k-1} \in W(A)$ such that $\wq \to \vq_1 \to \dots \to \vq_{k-1} \to \wq'$. 
We also define $\overset{0}{\to}$ as $\wq \overset{0}{\to} \wq'$ if and only if $\wq = \wq'$. 
Observe that the binary relations $\overset{1}{\to}$ and $\to$ coincide.
Denote by $\overset{*}{\to} = \bigcup_{k \geq 0} \overset{k}{\to}$ the reflexive-transitive closure of $\to$ on $W(A)$. 

The following proposition collects basic facts about canonical forms and the binary relation $\to$, which are used throughout the paper without explicit reference.
\begin{proposition}\label{prop:basic}
    Let $\vq, \wq \in W(A)$.
    The following statements hold.
    \begin{enumerate}[label=(\roman*), ref=\roman*]
        \item\label{prop:basic:i} If $\varphi(\vq) = \varphi(\wq)$, then $\can(\vq) = \can(\wq)$.
        \item\label{prop:basic:ii} The word $\wq$ is canonical if and only if $\wq = \can(\wq)$.
        \item\label{prop:basic:iii} If $\vq \overset{*}{\to} \wq$, then $\varphi(\vq) = \varphi(\wq)$.
        \item\label{prop:basic:iv} We have $\wq \overset{*}{\to} \can(\wq)$.
    \end{enumerate}
\end{proposition}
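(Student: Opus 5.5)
The four statements of Proposition~\ref{prop:basic} follow from Theorem~\ref{theorem:1} and Lemma~\ref{lemma:shorten}. We prove them in the order (\ref{prop:basic:iii}), (\ref{prop:basic:i}), (\ref{prop:basic:ii}), (\ref{prop:basic:iv}), since the first is a direct computation and the rest reduce to the uniqueness statement in Theorem~\ref{theorem:1}.

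For (\ref{prop:basic:iii}), it suffices by induction on $k$ to treat one step $\vq \to \wq$; transitivity of equality then handles $\overset{k}{\to}$. Write $\vq = \wq_1 \aq_i \sq \aq_i \wq_2$. Since $\varphi$ is a monoid homomorphism, $\varphi(\vq) = \varphi(\wq_1)\, a_i \varphi(\sq) a_i\, \varphi(\wq_2)$.
\begin{itemize}
\item In a right deletion, $\sq \in W(\{\aq_1,\dots,\aq_{i-1}\})$, and Lemma~\ref{lemma:shorten}~(\ref{lemma:shorten:i}) replaces $a_i\varphi(\sq)a_i$ by $a_i\varphi(\sq)$.
\item In a left deletion, Lemma~\ref{lemma:shorten}~(\ref{lemma:shorten:ii}) replaces it by $\varphi(\sq)a_i$.
\end{itemize}
In both cases the result is $\varphi(\wq)$.

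For (\ref{prop:basic:i}), the hypothesis $\varphi(\vq)=\varphi(\wq)$ means exactly that $\overline{\vq} = \varphi^{-1}(\{\varphi(\vq)\}) = \overline{\wq}$. By Theorem~\ref{theorem:1}~(\ref{theorem:1:i}) this set has a unique element of minimal length. Hence $\can(\vq)=\can(\wq)$.

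For (\ref{prop:basic:ii}), note that $\wq \in \overline{\wq}$ always holds. By Theorem~\ref{theorem:1}~(\ref{theorem:1:ii}), $\wq$ is canonical if and only if it is the unique minimal-length element of $\overline{\wq}$. By definition, that element is $\can(\wq)$.

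Statement (\ref{prop:basic:iv}) is the main obstacle, since it needs an argument that deletions actually reach the canonical form; we argue by induction on $l(\wq)$.
\begin{itemize}
\item If $\wq$ is canonical, then $\wq=\can(\wq)$ by (\ref{prop:basic:ii}), and $\wq \overset{0}{\to} \wq$.
\item Otherwise, the definition of canonical fails. So there are $i$ and a subword $\aq_i \uq \aq_i$ of $\wq$ such that $\uq$ lacks either every letter $\aq_j$ with $j>i$ or every letter $\aq_k$ with $k<i$.
\end{itemize}
In the non-canonical case, we first shrink to an innermost occurrence. If $\uq$ itself contains $\aq_i$, replace $\aq_i\uq\aq_i$ by the shorter subword between two consecutive occurrences of $\aq_i$. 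The restricted-letter property passes to this subword, so we may assume $\aq_i$ does not occur in $\uq$.
\begin{itemize}
\item If $\uq$ contains no letter $\aq_j$ with $j>i$, then $\uq \in W(\{\aq_1,\dots,\aq_{i-1}\})$ and a right deletion applies.
\item If $\uq$ contains no letter $\aq_k$ with $k<i$, then $\uq \in W(\{\aq_{i+1},\dots,\aq_n\})$ and a left deletion applies.
\end{itemize}
Either way we get $\wq \to \wq'$ with $l(\wq') = l(\wq)-1$. By (\ref{prop:basic:iii}), $\varphi(\wq')=\varphi(\wq)$, so (\ref{prop:basic:i}) gives $\can(\wq')=\can(\wq)$. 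The induction hypothesis gives $\wq' \overset{*}{\to} \can(\wq')$, and therefore $\wq \overset{*}{\to} \can(\wq)$.
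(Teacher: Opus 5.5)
Your proof is correct. Parts (i)--(iii) follow the paper: equality of the fibres $\overline{\vq} = \overline{\wq}$, Theorem~\ref{theorem:1}, and Lemma~\ref{lemma:shorten} applied one deletion at a time.

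The difference is in (iv). The paper does not prove it directly. It defers to the proof of Theorem~6 of Kudryavtseva and Mazorchuk, remarking that $\can(\wq)$ is the normal form of $\wq$ for the rewriting relation $\to$.

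You derive (iv) from the \emph{statement} of Theorem~\ref{theorem:1} alone, in three steps:
\begin{enumerate}
\item A non-canonical word always admits a length-decreasing deletion. After passing to two consecutive occurrences of $\aq_i$, the inner word avoids $\aq_i$, so it lies in $W(\{\aq_1,\dots,\aq_{i-1}\})$ or in $W(\{\aq_{i+1},\dots,\aq_n\})$.
\item Each deletion preserves $\can$, by (iii) and (i).
\item A canonical word equals its canonical form, by (ii).
\end{enumerate}
Strong induction on length then finishes.

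Your route is self-contained relative to the results quoted in the paper, and it yields slightly more than (iv). Since a maximal chain of deletions can only stop at a word to which no deletion applies, hence a canonical word, every maximal chain of deletions from $\wq$ ends at $\can(\wq)$. That is exactly the unique-normal-form fact the paper imports.

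The paper's route is shorter, but it asks the reader to look inside the cited proof rather than rely on its statement. In both approaches the real depth lies in Theorem~\ref{theorem:1}, which you use as a black box.
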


\begin{proof}
    (\ref{prop:basic:i})~If $\varphi(\vq) = \varphi(\wq)$, then $\overline{\vq} = \overline{\wq}$, and hence $\can(\vq) = \can(\wq)$.
    (\ref{prop:basic:ii})~Since $\wq \in \overline{\wq}$, this follows from Theorem~\ref{theorem:1}~(\ref{theorem:1:ii}).
    (\ref{prop:basic:iii})~If $\vq \to \wq$, then $\varphi(\vq) = \varphi(\wq)$ by Lemma~\ref{lemma:shorten}. Hence, if $\vq \overset{*}{\to} \wq$, then $\varphi(\vq) = \varphi(\wq)$.
    (\ref{prop:basic:iv})~This follows from the proof of~\cite[Theorem~6]{kudryavtseva}; $\can(\wq)$ is the \emph{normal form} of $\wq$ with respect to the binary relation $\to$ on $W(A)$.
    See~\cite[Chapter~2.1]{baader} for a general reference on term rewriting.
\end{proof}

For $x \in K_n$, we define its \emph{canonical form}, denoted by $\zeta(x)$, to be $\can(\wq)$, where $\wq \in W(A)$ satisfies $\varphi(\wq) = x$. 
This is well defined, since Proposition~\ref{prop:basic}~(\ref{prop:basic:i}) implies that all such words $\wq$ have the same canonical form.
We then have $x = \varphi(\zeta(x))$ for every $x \in K_n$.
\begin{lemma}\label{lemma:canonical_unique}
    Let $x, y \in K_n$.
    If $\zeta(x) = \zeta(y)$, then $x = y$.
\end{lemma}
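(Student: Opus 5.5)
The plan is to exploit the identity $x = \varphi(\zeta(x))$, which was noted immediately after the definition of $\zeta$, together with the fact that $\varphi$ is a well-defined map. Assume $\zeta(x) = \zeta(y)$. Applying $\varphi$ to both sides gives $\varphi(\zeta(x)) = \varphi(\zeta(y))$, and the left and right sides equal $x$ and $y$ respectively, so $x = y$.

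The one step that needs justification is the identity $x = \varphi(\zeta(x))$ itself; I would verify it explicitly rather than just quote it. Choose any $\wq \in W(A)$ with $\varphi(\wq) = x$. Such a word exists because $\varphi$ is an epimorphism. By definition $\zeta(x) = \can(\wq)$. Proposition~\ref{prop:basic}~(\ref{prop:basic:iv}) gives $\wq \overset{*}{\to} \can(\wq)$. Then Proposition~\ref{prop:basic}~(\ref{prop:basic:iii}) yields $\varphi(\can(\wq)) = \varphi(\wq) = x$. Alternatively, one can use directly that $\can(\wq) \in \overline{\wq}$, being the minimal-length element of that set by Theorem~\ref{theorem:1}~(\ref{theorem:1:i}); membership in $\overline{\wq}$ means exactly $\varphi(\can(\wq)) = \varphi(\wq)$.

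I do not expect a real obstacle here. The only point of care is that $\zeta$ is well defined, that is, independent of the chosen $\wq$. This has already been settled by Proposition~\ref{prop:basic}~(\ref{prop:basic:i}), so the argument is a short unpacking of the definitions. Together with the converse, which holds simply because $\zeta$ is a function, it shows that $\zeta$ is an injection of $K_n$ into the set of canonical words.
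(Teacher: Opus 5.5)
Your proposal is correct and follows essentially the same route as the paper: both arguments rest on $\can(\wq) \in \overline{\wq}$, so that $x = \varphi(\zeta(x)) = \varphi(\zeta(y)) = y$. The paper writes this as $\varphi(\wq) = \varphi(\can(\wq)) = \varphi(\can(\vq)) = \varphi(\vq)$, which is the same chain with representative words made explicit.
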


\begin{proof}
    Write $x = \varphi(\wq)$ and $y = \varphi(\vq)$ for some $\wq, \vq \in W(A)$.
    Since $\zeta(x) = \zeta(y)$, we have $\can(\wq) = \can(\vq)$.
    Then we have $\can(\wq) \in \overline{\wq}$ and $\can(\wq) = \can(\vq) \in \overline{\vq}$.
    This implies $\varphi(\wq) = \varphi(\can(\wq)) = \varphi(\vq)$, which yields $x = y$.
\end{proof}

In~\cite[Proposition~29]{kudryavtseva}, the following proposition was proved. 
\begin{proposition}\label{p:can_injective}
    Let $\uq, \vq, \wq \in W(\{\aq_2, \aq_3, \dots, \aq_n\})$ be canonical words.
    Assume that $\wq \aq_1 \uq$ and $\wq \aq_1 \vq$ are both canonical. 
    If $\uq \neq \vq$, 
    then 
    $\varphi(\wq \uq) \neq \varphi(\wq \vq)$.
\end{proposition}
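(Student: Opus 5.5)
The plan is to compare canonical forms. Since $\varphi(\wq\uq)=\varphi(\wq\vq)$ would force $\can(\wq\uq)=\can(\wq\vq)$ by Proposition~\ref{prop:basic}~(\ref{prop:basic:i}), it suffices to show that $\can(\wq\uq)\neq\can(\wq\vq)$. The hypothesis that $\wq\aq_1\uq$ is canonical will be translated into a combinatorial condition on the junction between $\wq$ and $\uq$. Take any subword $\aq_i\sq\aq_i$ of $\wq\aq_1\uq$ whose first $\aq_i$ lies in $\wq$ and whose second lies in $\uq$. Then $\sq$ contains $\aq_1$, and $1<i$, so canonicity only requires some $\aq_j$ with $j>i$ in $\sq$. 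Since $\aq_1$ itself is never larger than $i$, that $\aq_j$ lies in the part of $\sq$ inside $\wq$ or inside $\uq$. The same holds for $\vq$.

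The first key step is to describe the rewriting $\wq\uq\overset{*}{\to}\can(\wq\uq)$ from Proposition~\ref{prop:basic}~(\ref{prop:basic:iv}). I would prove by induction on the number of steps that every word reachable from $\wq\uq$ has the form $\wq'\uq$, where:
\begin{enumerate}[label=(\alph*)]
\item $\wq'$ is obtained from $\wq$ by deleting letters;
\item the junction condition above still holds for $\wq'$ and $\uq$.
\end{enumerate}
Write a deletion pattern as $\aq_i\sq\aq_i$. If both occurrences of $\aq_i$ lie in $\uq$, no deletion is possible, because $\uq$ is canonical. If the pattern straddles the junction, $\sq$ contains a larger letter, so a right deletion is impossible. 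Only a left deletion is then possible, and it removes the occurrence of $\aq_i$ lying in $\wq'$. If the pattern lies inside $\wq'$, the deletion again only affects $\wq'$. To preserve the junction condition, one uses that deleting a letter of $\wq'$ can only shorten straddling subwords. The one delicate case is deleting a letter that was the witnessing larger letter; there I would argue, using the shape of a left deletion $\aq_i\sq\aq_i\mapsto\sq\aq_i$ and that of a right deletion, that another occurrence of a larger letter survives between the relevant positions. Granting this, $\can(\wq\uq)=\wq'\uq$ and $\can(\wq\vq)=\wq''\vq$, where $\wq',\wq''$ are obtained from $\wq$ by deleting letters.

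The second step is the comparison. Suppose $\wq'\uq=\wq''\vq$ with $\uq\neq\vq$. Then one of $\uq,\vq$ is a proper suffix of the other; say $\uq=\tq\vq$ with $\tq$ nonempty, so that $\wq''=\wq'\tq$. (Here $\tq$ is notation for this argument only.) I expect this to be the main obstacle. My first attempt would be a length and occurrence count. The letters of $\tq$ are the letters of $\wq$ that survive at the end of $\wq''$, and $\wq''$ is obtained from $\wq$ by deletions only. So the last letter of $\tq$, say $\aq_m$, is matched both by a surviving letter of $\wq$ and by the last letter of $\tq\subseteq\uq$. I would then use the junction condition for $\wq\aq_1\uq$ applied to that $\aq_m$ in $\uq$ and its last occurrence in $\wq$. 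That condition gives a larger letter in between which cannot have been deleted, since a letter that is maximal on a segment is never removed by a deletion supported on that segment. This larger letter would then have to appear in $\wq''$ after the matched $\aq_m$, giving a contradiction.

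If this direct matching argument becomes too messy, the fallback is induction on $l(\uq)+l(\vq)$. Strip the common last letter when $\uq$ and $\vq$ end with the same letter; this uses that suffixes of canonical words are canonical. When the last letters differ, analyse the last letters of $\wq'\uq$ and $\wq''\vq$ directly.
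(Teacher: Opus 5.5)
\paragraph{Verdict.} There is a genuine gap, and it sits in your comparison step. The paper gives no proof of this statement (it is quoted from \cite[Proposition~29]{kudryavtseva}), so your argument has to stand on its own.

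\paragraph{Where the comparison step fails.} You take the letter $\aq_j$, $j>m$, that canonicity of $\wq\aq_1\uq$ places between the last $\aq_m$ of $\wq$ and the $\aq_m$ ending $\mathbf{t}$. You then claim this letter survives in $\can(\wq\vq)$. There are two problems. First, this witness may be a letter of $\mathbf{t}$ itself, and then nothing about $\wq''$ follows. Second, if it lies in $\wq$, it can be removed while reducing $\wq\vq$ by a left deletion whose partner lies in $\vq$. Such a deletion is not supported on your segment, so the ``maximal letter'' principle says nothing about it.

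Concretely, let $n=5$, $\wq=\aq_2\aq_4$, $\vq=\aq_5\aq_4$, $\uq=\aq_2\aq_5\aq_4$. All hypotheses hold, and $\wq\vq=\aq_2\aq_4\aq_5\aq_4\to\aq_2\aq_5\aq_4$. So $\wq''=\aq_2$ and $\uq=\mathbf{t}\vq$ with $\mathbf{t}=\aq_2$. Your witness is the $\aq_4$ of $\wq$, and it \emph{is} deleted. The two canonical forms differ only because $\wq\uq=\aq_2\aq_4\aq_2\aq_5\aq_4\to\aq_4\aq_2\aq_5\aq_4$ keeps that same $\aq_4$: it is shielded by the $\aq_2$ of $\mathbf{t}$, so $\wq'=\aq_4\neq\eq$. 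A correct argument must therefore compare how the two reductions treat one and the same letter of $\wq$.

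Your fallback does not do this either. Removing a common last letter from two canonical forms does not give canonical forms of the shorter products: here $\can(\wq\aq_5\aq_4)=\aq_2\aq_5\aq_4$, but $\can(\wq\aq_5)=\aq_2\aq_4\aq_5$. Since $K_n$ is also not right cancellative, the induction hypothesis cannot be invoked.

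\paragraph{Step 1.} The claim is correct, but not for the local reason you give. If the unique witness between a straddling pair of $\aq_i$'s at positions $p<q$ is removed by a right deletion, the surviving $\aq_j$ lies to the \emph{left} of $p$, outside the segment. What actually holds is that no right deletion ever occurs while reducing $\wq\uq$. To see this, take two copies of $\aq_j$ and the last-deleted letter of index $>j$ between them. Its left-deletion partner also lies between them, and would have to be deleted later, a contradiction.

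\paragraph{A way to repair both steps.} Read $\wq=\aq_{k_1}\cdots\aq_{k_r}$ from the right.
\begin{enumerate}
\item Put $\mathbf{z}_s=\can(\aq_{k_s}\cdots\aq_{k_r}\uq)$, and let $\mathbf{z}'_s$ be the analogous words built from $\vq$.
\item Show by downward induction that $\mathbf{z}_s=\mathbf{z}_{s+1}$ if the first letter of $\mathbf{z}_{s+1}$ of index at most $k_s$ is $\aq_{k_s}$, and $\mathbf{z}_s=\aq_{k_s}\mathbf{z}_{s+1}$ otherwise. Canonicity of $\wq\aq_1\uq$ excludes the right-deletion case.
\item Suppose $\uq\neq\vq$ but $\mathbf{z}_1=\mathbf{z}'_1$. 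Choose $s$ with $\mathbf{z}_s=\mathbf{z}'_s$ and $\mathbf{z}_{s+1}\neq\mathbf{z}'_{s+1}$. By symmetry, $\mathbf{z}'_{s+1}=\mathbf{z}'_s=\aq_{k_s}\mathbf{z}_{s+1}$.
\item So $\mathbf{z}'_{s+1}$ begins with an $\aq_{k_s}$, which is either a kept letter of $\wq$ or the first letter of $\vq$. Every letter of $\wq$ strictly between position $s$ and it was deleted.
\item Canonicity of $\wq\aq_1\vq$ puts some $\aq_m$ with $m>k_s$ among these deleted letters.
\item But the word behind that $\aq_m$ is still $\mathbf{z}'_{s+1}$, whose first letter of index at most $m$ is $\aq_{k_s}\neq\aq_m$. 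So the $\aq_m$ could not have been deleted, a contradiction.
\end{enumerate}
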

This proposition implies that if 
$\uq, \vq, \wq \in W(\{\aq_2, \aq_3, \dots, \aq_n\})$ are canonical words such that 
$\wq \aq_1 \uq$ and $\wq \aq_1 \vq$ are both canonical, then the equality 
$\varphi(\wq \uq) = \varphi(\wq \vq)$ implies that $\uq = \vq$.

\subsection{Idempotents}\label{IDE}

Let $X \subseteq \{1, 2, \dots, n\}$. If $X = \emptyset$, define $e_X = e$, the unit element in $K_n$, and $\eq_X = \eq$, the empty word.
Otherwise, write $X = \{i_1, i_2, \dots, i_k\}$ with $i_1 > i_2 > \dots > i_k$ and set
\[
e_X = a_{i_1} a_{i_2} \dots a_{i_k} \quad \text{and} \quad \eq_X = \aq_{i_1} \aq_{i_2} \dots \aq_{i_k}.
\]
Then we have $\varphi(\eq_X) = e_X$ and the word $\eq_X$ is canonical for every $X \subseteq \{1, 2, \dots, n\}$.

In~\cite[Remark~16]{kudryavtseva}, it was established that $e_{\{1, 2, \dots, n\}}$ is the zero
element of $K_n$. We denote 
$e_{\{1, 2, \dots, n\}}$ by $f$, and $\eq_{\{1, 2, \dots, n\}}$ by $\fq$.

\begin{remark}\label{rm1}
    For every $\wq \in W(A)$, we have $\fq \wq \overset{*}{\to} \fq$.
    Therefore, the word $\fq \wq$ is canonical if and only if $\wq$ is the empty word.
\end{remark}

Throughout the paper, we often use the trivial
equalities 
\[
e_{\{2, 3, \dots, n\}} a_1 = f \quad \text{and} \quad \eq_{\{2, 3, \dots, n\}} \aq_1 = \fq.
\]

In~\cite[Proposition~11]{kudryavtseva}, the authors showed that $e_X$ is an idempotent in 
$K_n$ for every $X \subseteq \{1, 2, \dots, n\}$.
Furthermore, they showed that $e_X \neq e_Y$ if $X \neq Y$,
and proved that
every idempotent in $K_n$ is of the form $e_X$ for some $X \subseteq \{1, 2, \dots, n\}$.
As a result, $K_n$ contains exactly $2^n$ idempotents.

Define the \emph{content} map on $K_n$ as $c : K_n \to \mathbbm{2}^{\{1, 2, \dots, n\}}$, where $c(x)$ is the set of
all $i \in \{1, 2, \dots, n\}$ such that $\aq_i$ occurs 
in the canonical form of $x$. 
In~\cite[Lemma~10]{kudryavtseva}, it was shown that the following holds.
\begin{proposition}
  The map $c$ is a semigroup epimorphism from $K_n$ onto the semigroup $(\mathbbm{2}^{\{1, 2, \dots, n\}}, \cup)$.
\end{proposition}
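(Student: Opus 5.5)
We must show that $c$ is a semigroup homomorphism, $c(xy) = c(x) \cup c(y)$, and that it is surjective. The plan is to reduce everything to the rewriting system $\to$ and to observe that no deletion step ever changes the set of letters occurring in a word.

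\emph{Step 1: content is an invariant of $\to$.} For $\wq \in W(A)$, let $C(\wq)$ be the set of indices $i$ such that $\aq_i$ occurs in $\wq$. Suppose $\wq \to \wq'$ via $\wq = \wq_1 \aq_i \sq \aq_i \wq_2$, as in Definition~\ref{definition11}. Both right and left deletion remove exactly one of the two occurrences of $\aq_i$ and keep the other. Hence $C(\wq') = C(\wq)$, and by induction on $k$, $\wq \overset{*}{\to} \wq'$ implies $C(\wq) = C(\wq')$. By Proposition~\ref{prop:basic}~(\ref{prop:basic:iv}) we have $\wq \overset{*}{\to} \can(\wq)$, and therefore $C(\can(\wq)) = C(\wq)$.

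\emph{Step 2: a convenient formula for $c$.} If $x = \varphi(\wq)$ for any word $\wq$, then by definition $c(x) = C(\zeta(x)) = C(\can(\wq)) = C(\wq)$. So $c(x)$ may be computed from an arbitrary representative word, not only the canonical one.

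\emph{Step 3: the homomorphism property.} Take $x = \varphi(\uq)$ and $y = \varphi(\vq)$. Then $xy = \varphi(\uq\vq)$, so by Step 2
\[
c(xy) = C(\uq\vq) = C(\uq) \cup C(\vq) = c(x) \cup c(y).
\]
Since $\varphi(\eq) = e$ and $C(\eq) = \emptyset$, we also get $c(e) = \emptyset$, so the identity elements correspond.

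\emph{Step 4: surjectivity.} For $X \subseteq \{1, \dots, n\}$, the word $\eq_X$ satisfies $C(\eq_X) = X$. Therefore $c(e_X) = X$.

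The only point needing care is Step 1: the well-definedness of $\zeta$ and the fact that $\can(\wq)$ is reachable from $\wq$ under $\overset{*}{\to}$ (Proposition~\ref{prop:basic}). Both are already available, so no real obstacle remains.
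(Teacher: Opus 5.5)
The paper gives no proof of this proposition; it quotes it from~\cite[Lemma~10]{kudryavtseva}. Your argument is correct and complete, given Proposition~\ref{prop:basic}. A deletion never removes the last occurrence of a letter, so $C$ is invariant under $\overset{*}{\to}$, and hence $c(x)$ can be computed from any representative word. The homomorphism property and surjectivity (via $e_X$) then follow immediately.

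A different route is available for Step~1. Instead of the one-directional rewriting $\to$, you can use the defining relations directly. In each of $a_i^2 = a_i$ and $a_i a_j a_i = a_j a_i a_j = a_i a_j$, both sides involve the same letters. Moreover, $\varphi(\uq) = \varphi(\vq)$ means that $\uq$ and $\vq$ are linked by finitely many replacements of one side of a relation by the other inside a word. So $C$ is constant on every fibre $\overline{\wq}$. Since $\can(\wq) \in \overline{\wq}$, this gives $C(\can(\wq)) = C(\wq)$ using only the definition of the canonical form.

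That route avoids Proposition~\ref{prop:basic}~(\ref{prop:basic:iv}), which the paper itself imports from the proof of~\cite[Theorem~6]{kudryavtseva}. Your version relies on that heavier confluence/normal-form fact. In exchange, it stays inside the rewriting framework the paper already uses, which matches how the paper defines $c$ through canonical forms.
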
 

In~\cite[Proposition~20]{kudryavtseva}, the following was proved.
\begin{proposition}
    The only automorphism of $K_n$ is the identity. The map $a_i \mapsto a_{n-i+1}$ extends uniquely to an 
    antiautomorphism of $K_n$. This is the only antiautomorphism of $K_n$.
\end{proposition}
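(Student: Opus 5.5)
The statement consists of two parts: automorphisms of $K_n$, and the antiautomorphism. Both rest on the fact that the generating set $\{a_1, \dots, a_n\}$ is intrinsically determined, together with the asymmetry of the defining relations.

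\textbf{Step 1: the generators are intrinsic.} I would show that every generating set of $K_n$ (as a monoid) contains each $a_i$. Suppose $a_i = x_1 x_2 \cdots x_m$ with $x_k$ in some generating set. The content map $c$ is a homomorphism to $(\mathbbm{2}^{\{1,\dots,n\}},\cup)$, so $c(x_k) \subseteq \{i\}$ for every $k$. An element with content $\emptyset$ is $e$. An element with content $\{i\}$ has canonical form a power of $\aq_i$, and the only canonical one is $\aq_i$ itself. Hence each $x_k \in \{e, a_i\}$, and some $x_k = a_i$.

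Consequently $\{a_1,\dots,a_n\}$ is the unique minimal generating set. Any automorphism $\sigma$ maps it onto a minimal generating set, so $\sigma$ permutes the generators: $\sigma(a_i) = a_{\pi(i)}$ for some permutation $\pi$.

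\textbf{Step 2: $\pi$ is the identity.} I would encode the order on indices by an intrinsic relation. For $i \ne j$, write $j \prec i$ if $a_i a_j a_i = a_i a_j$.

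From the defining relations, $j<i$ implies $j \prec i$. Conversely, if $j > i$, then $a_i a_j a_i = a_j a_i$. Both $\aq_i\aq_j$ and $\aq_j\aq_i$ are canonical, since they contain no subword $\aq_k\uq\aq_k$. They are distinct, so by Theorem~\ref{theorem:1} and Lemma~\ref{lemma:canonical_unique}, $a_j a_i \neq a_i a_j$. Therefore $j \prec i$ holds exactly when $j<i$.

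Since $\sigma$ is an automorphism, it preserves $\prec$. Thus $\pi$ is an order automorphism of the chain $\{1,\dots,n\}$, which forces $\pi = \mathrm{id}$. As $\sigma$ fixes the generators, $\sigma = \mathrm{id}$. The main care is in this distinctness check, which the canonical-word machinery handles.

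\textbf{Step 3: existence of the antiautomorphism.} Define $\rho$ on $W(A)$ by reversing a word and replacing each $\aq_i$ with $\aq_{n-i+1}$. To see that $\rho$ descends to $K_n$, I would check that it carries defining relations to relations of $K_n$; write $i' = n-i+1$.
\begin{itemize}
\item The relation $a_i^2=a_i$ is sent to $a_{i'}^2 = a_{i'}$.
\item For $j<i$, the relation $a_ia_ja_i=a_ia_j$ is sent to $a_{i'}a_{j'}a_{i'} = a_{j'}a_{i'}$. Here $i'<j'$, so this is a defining relation.
\item Likewise, $a_ja_ia_j=a_ia_j$ is sent to $a_{j'}a_{i'}a_{j'}=a_{j'}a_{i'}$, again a defining relation.
\end{itemize}
So $\rho$ induces an antihomomorphism $K_n\to K_n$. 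It satisfies $\rho\circ\rho=\mathrm{id}$ on generators, hence everywhere, so it is bijective. Uniqueness of the extension is clear because the $a_i$ generate $K_n$.

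\textbf{Step 4: uniqueness of the antiautomorphism.} If $\tau$ is any antiautomorphism, then $\tau\circ\rho$ is an automorphism. By Steps 1--2 it is the identity, so $\tau = \rho^{-1} = \rho$.
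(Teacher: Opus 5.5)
The paper does not prove this proposition. It quotes it from \cite[Proposition~20]{kudryavtseva}, so there is no internal argument to compare yours against.

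Your proof is correct and self-contained relative to the paper's preliminaries:
\begin{itemize}
\item Step~1 shows that every generating set contains each $a_i$. It uses the content epimorphism $c$ and the fact that $\aq_i$ is the only nonempty canonical word over $\{\aq_i\}$. This forces any automorphism to permute the generators.
\item Step~2 recovers the order on indices intrinsically, since $a_ia_ja_i = a_ia_j$ holds exactly when $j<i$. This forces the permutation to be the identity.
\item Step~3 checks that the reversed and reindexed defining relations are again defining relations.
\item Step~4 reduces uniqueness of the antiautomorphism to the automorphism statement via $\tau\circ\rho$.
\end{itemize}

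One citation should be corrected. To conclude $a_ja_i \neq a_ia_j$ from $\aq_j\aq_i \neq \aq_i\aq_j$, you need that distinct canonical words represent distinct elements. That is Theorem~\ref{theorem:1}, equivalently Proposition~\ref{prop:basic}~(i),(ii). Lemma~\ref{lemma:canonical_unique} is the converse implication (equal canonical forms give equal elements), so it plays no role here; since you also cite Theorem~\ref{theorem:1}, the argument stands once that reference is dropped.

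It is also worth noticing that the paper only ever uses the existence part, which is your Step~3. It relies on it in Remark~\ref{tau_rm}, in Theorem~\ref{ct1}~(ii), and in reducing $a_n x = f$ to $x a_1 = f$. Your direct relation check supplies exactly that, without leaning on the external reference.
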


We denote the unique antiautomorphism of $K_n$ by $\tau$.

\begin{remark}\label{tau_rm}
    Observe that $\tau(f) = \tau(a_n a_{n-1} \dots a_2 a_1) = 
    \tau(a_1) \tau(a_2) \dots \tau(a_{n-1}) \tau(a_n) = a_n a_{n-1} \dots a_2 a_1 = f$. Hence 
    $\tau$ fixes $f$.
\end{remark}

\section{Zero Cancellation Theorem}\label{ZCT}

In this section, we prove the Zero Cancellation Theorem, which we now state.
\begin{theorem}[Zero Cancellation Theorem]\label{ct1}
    Let $x, y \in K_n$ satisfy $xy = f$. 
    \begin{enumerate}[label=(\roman*), ref = \roman*]
        \item\label{ct1:i} If $c(y) \subseteq \{2, 3, \dots, n\}$, then $x = f$.
        \item\label{ct1:ii} If $c(x) \subseteq \{1, 2, \dots, n-1\}$, then $y = f$.
    \end{enumerate}
\end{theorem}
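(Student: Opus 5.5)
Part (\ref{ct1:ii}) will follow from part (\ref{ct1:i}) by applying the antiautomorphism $\tau$. If $xy = f$, then $\tau(y)\tau(x) = \tau(f) = f$ by Remark~\ref{tau_rm}. Since $\tau$ maps $a_i$ to $a_{n-i+1}$, we get $c(\tau(x)) = \{n-i+1 \mid i \in c(x)\}$, and this set lies in $\{2,\dots,n\}$ whenever $c(x) \subseteq \{1,\dots,n-1\}$. Part (\ref{ct1:i}) then gives $\tau(y) = f$, hence $y = \tau(f) = f$. So everything reduces to (\ref{ct1:i}).

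For (\ref{ct1:i}) I first reduce to a single generator. If $c(y) \subseteq \{2,\dots,n\}$ and $y \neq e$, write $y = y' a_i$ with $i \geq 2$ and $c(y') \subseteq \{2,\dots,n\}$. Then $(xy')a_i = f$. If we know the single-letter statement
\[
z a_i = f,\ i \geq 2 \ \Longrightarrow\ z = f,
\]
we get $xy' = f$, and induction on the length of $\zeta(y)$ gives $x = f$. (The case $y = e$ is trivial.)

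To prove the single-letter statement, I take $\wq = \zeta(z)$, which is canonical, and study $\can(\wq\aq_i)$. The target is $\fq = \eq_{\{2,\dots,n\}}\aq_1$, whose last letter is $\aq_1 \neq \aq_i$. I would proceed by induction on $l(\wq)$ and use a controlled rewriting strategy; confluence (Proposition~\ref{prop:basic}~(\ref{prop:basic:iv}) together with Theorem~\ref{theorem:1}) guarantees that any strategy reaches $\can(\wq\aq_i)$. If $\aq_i$ does not occur in $\wq$, then $\wq\aq_i$ is canonical and ends in $\aq_i$, so it cannot equal $\fq$. Otherwise write $\wq = \wq_1\aq_i\sq$ with $\aq_i$ not occurring in $\sq$. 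There are three cases.
\begin{itemize}
\item If $\sq$ contains both a letter above and a letter below $i$, then $\wq\aq_i$ is canonical. It ends in $\aq_i$, again a contradiction.
\item If $\sq$ uses only letters below $i$, a right deletion gives $\wq\aq_i \to \wq$. Hence $z = \varphi(\wq) = z a_i = f$, as required.
\item If $\sq$ uses only letters above $i$, a left deletion gives $\wq_1\sq\aq_i$. Put $z' = \varphi(\wq_1\sq)$. Then $z' a_i = f$, and $l(\zeta(z')) < l(\wq)$, so by induction $z' = f$.
\end{itemize}

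The main obstacle is the third case: passing from $z' = \varphi(\wq_1\sq) = f$ back to $z = \varphi(\wq_1\aq_i\sq) = f$. My first attempt would use that $f$ is a zero and that $\aq_i$ already occurs in $\wq_1\sq$ by content (we have $c(z') \cup \{i\} = \{1,\dots,n\}$, and I would need to check $i \in c(z')$). I would then try to show that reinserting $\aq_i$ before the block $\sq$ of larger letters does not change the element. Concretely, since $\sq$ consists of letters $>i$, Lemma~\ref{lemma:shorten}~(\ref{lemma:shorten:ii}) gives $a_i\varphi(\sq)a_i = \varphi(\sq)a_i$. Combining this with $z' = f$ and $z = z a_i$-type manipulations should express $z$ as a product containing $f$ as a factor. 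If this direct manipulation fails, the fallback is a finer invariant of canonical words: show that for canonical $\wq$ and $i \geq 2$, $\can(\wq\aq_i)$ is either $\wq$ itself or ends with the letter $\aq_i$. This would rule out $\fq$ in every case except $\can(\wq\aq_i) = \wq$, which gives $z = f$ immediately.
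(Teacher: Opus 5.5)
Your reduction of (ii) to (i) via $\tau$ and your reduction of (i) to single letters $a_i$ with $i\ge 2$ are correct. Your treatment of three cases is also correct: $\aq_i$ absent from $\wq$, $\sq$ with letters on both sides of $i$, and $\sq$ entirely below $i$. The gap is the third case ($\sq$ entirely above $i$), which you leave open and which carries the whole difficulty.

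In that case $z=f$ is never actually true. Indeed $\zeta(z)=\wq$, whereas in $\fq=\aq_n\cdots\aq_1$ the only $\aq_i$ is followed by the nonempty block $\aq_{i-1}\cdots\aq_1$ of smaller letters, so $\wq\neq\fq$. What you must show is that case 3 is incompatible with $za_i=f$. No manipulation can produce $z=f$ from $z'=f$ here:
\begin{itemize}
\item there is no relation $z=za_i$ available in this case (that is case 2);
\item $a_i\varphi(\sq)a_i=\varphi(\sq)a_i$ only reproduces $za_i=z'a_i$, which you already used.
\end{itemize}
Your fallback invariant ($\can(\wq\aq_i)$ is $\wq$ or ends in $\aq_i$) is asserted without proof. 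Proving it means controlling the cascade of deletions in $\wq_1\sq\aq_i$, where $\wq_1\sq$ need not be canonical. The obvious recursion on $\can(\wq_1\sq)\aq_i$ may land in your case 2 with output $\can(\wq_1\sq)$, which is neither $\wq$ nor visibly ending in $\aq_i$.

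The missing step is available within your own induction: do not stop at $z'=f$. Use strong induction on $l(\zeta(z))$, with the hypothesis quantified over all $i\ge 2$ simultaneously. Write $\sq=\sq'\aq_j$ with $j>i\ge 2$. Then $\varphi(\wq_1\sq')a_j=z'=f$, and the canonical form of $\varphi(\wq_1\sq')$ has length at most $l(\wq_1\sq')<l(\wq)$, so the hypothesis applies again. Peeling off every letter of $\sq$ this way yields $\varphi(\wq_1)=f$. Since $\wq_1$ is canonical (a subword of $\wq$), $\wq_1=\fq$. Then $\wq=\fq\aq_i\sq$ with $\aq_i\sq$ nonempty, contradicting Remark~\ref{rm1}. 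So case 3 cannot occur when $za_i=f$.

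The paper avoids both the induction and the case split by tracking $\aq_1$ rather than $\aq_i$. The letter $\aq_1$ occurs at most once in a canonical word. By Lemma~\ref{lc0}, every reduction of $\wq_1\aq_1\uq$ (with $\wq_1$ canonical and $\uq$ free of $\aq_1$) deletes only letters after $\aq_1$, so the prefix before $\aq_1$ is preserved (Theorem~\ref{main}). Comparing with $\fq=\eq_{\{2,\dots,n\}}\aq_1$ gives $\wq=\fq\vq'$, and canonicity forces $\vq'=\eq$. This handles an arbitrary right factor $y$ with content in $\{2,\dots,n\}$ at once, without reducing to single letters.
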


The proof proceeds as follows. We first recall the notion 
of a quasi-subword. We then establish several technical results 
concerning canonical forms, exploiting the key role of the letter $\aq_1$, and
use these to prove the Zero Cancellation Theorem.

\begin{definition}
    Let $\wq \in W(A)$ be a word. Write $\wq = \aq_{i_1} \aq_{i_2} \dots \aq_{i_k}$.
    A word of the form $\vq = \aq_{i_{j_1}} \aq_{i_{j_2}} \dots \aq_{i_{j_l}}$, where $1 \leq j_1 < j_2 < \dots < j_l \leq k$, is called a \emph{quasi-subword} of $\wq$.
    In this case, we write $\vq \leq \wq$.
\end{definition}

It follows immediately from the definition that if
$\uq \leq \vq$ and $\vq \leq \wq$, then 
$\uq \leq \wq$.
Moreover, we have $\wq \leq \wq$ for every word $\wq \in W(A)$.

\begin{lemma}\label{can_quasi}
    For every $\wq \in W(A)$, we have $\can(\wq) \leq \wq$.
\end{lemma}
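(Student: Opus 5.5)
The plan is to combine Proposition~\ref{prop:basic}~(\ref{prop:basic:iv}) with the observation that a single rewriting step never creates new letters. By Proposition~\ref{prop:basic}~(\ref{prop:basic:iv}), we have $\wq \overset{*}{\to} \can(\wq)$. So there exist $k \geq 0$ and words $\vq_0, \vq_1, \dots, \vq_k$ with
\[
\wq = \vq_0 \to \vq_1 \to \dots \to \vq_k = \can(\wq).
\]
Since $\leq$ is reflexive and transitive, as noted just after the definition of a quasi-subword, it suffices to prove the one-step claim: if $\vq \to \vq'$, then $\vq' \leq \vq$. A straightforward induction on $k$ then gives $\can(\wq) \leq \wq$; the case $k = 0$ is reflexivity.

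For the one-step claim, I unpack Definition~\ref{definition11}. We have $\vq = \wq_1 \aq_i \sq \aq_i \wq_2$, and there are two cases.
\begin{enumerate}
    \item For a right deletion, $\vq' = \wq_1 \aq_i \sq \wq_2$. This is obtained from $\vq$ by omitting the second displayed occurrence of $\aq_i$.
    \item For a left deletion, $\vq' = \wq_1 \sq \aq_i \wq_2$. This is obtained from $\vq$ by omitting the first displayed occurrence of $\aq_i$.
\end{enumerate}

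In either case, write $\vq = \aq_{i_1} \dots \aq_{i_m}$, and let $p$ be the position of the deleted letter. Then $\vq'$ is exactly the quasi-subword of $\vq$ given by the increasing index sequence $1 < \dots < p-1 < p+1 < \dots < m$. Hence $\vq' \leq \vq$.

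There is no real obstacle here. The only point requiring care is the appeal to Proposition~\ref{prop:basic}~(\ref{prop:basic:iv}), which relies on the external result that $\can(\wq)$ is reachable from $\wq$ by deletions. Everything else is bookkeeping with indices, namely checking that removing one letter preserves the relative order of the remaining letters.
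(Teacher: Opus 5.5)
Your proof is correct and follows essentially the same route as the paper. Both arguments take the reduction chain $\wq \overset{*}{\to} \can(\wq)$ from Proposition~\ref{prop:basic}~(\ref{prop:basic:iv}), use reflexivity and transitivity of $\leq$, and reduce to the one-step fact that $\uq \to \uq'$ implies $\uq' \leq \uq$; you simply spell out the single-letter-deletion bookkeeping that the paper calls immediate from Definition~\ref{definition11}.
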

\begin{proof}
    If $\wq = \can(\wq)$, the statement follows from $\wq \leq \wq$.
    Otherwise, there exist $k \geq 1$ and words $\vq_1, \dots, \vq_{k-1} \in W(A)$ such that 
    $\wq \to \vq_1 \to \dots \to \vq_{k-1} \to \can(\wq)$.
    Since $\leq$ is transitive, it suffices to show that for any 
    $\uq, \uq' \in W(A)$, if 
    $\uq \to \uq'$, then $\uq' \leq \uq$.
    This follows immediately from the definition of $\to$.
\end{proof}

\begin{remark}\label{remark:a_i_occur}
    Let $\wq \in W(A)$. If the letter $\aq_i$ occurs in $\can(\wq)$, 
    then $\aq_i \leq \can(\wq) \leq \wq$, which implies
    that $\aq_i \leq \wq$.
    Therefore, $\aq_i$ occurs in $\wq$.
\end{remark}

The following lemma describes the canonical form of 
words of the form $\wq \aq_1 \uq$. The essence of this result is used implicitly
in the proof of~\cite[Proposition~28]{kudryavtseva}. For the sake of
completeness, we state and prove it here.
\begin{lemma}\label{lc0}
    Let $\wq, \uq \in W(\{\aq_2, \aq_3, \dots, \aq_n\})$ and assume that $\wq$ is canonical.
    Then
    \begin{equation*}
        \can(\wq \aq_1 \uq) = \wq \aq_1 \uq^*,
    \end{equation*}
    for some $\uq^* \leq \uq$.
\end{lemma}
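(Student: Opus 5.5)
We argue through the rewriting relation $\to$. By Proposition~\ref{prop:basic}~(\ref{prop:basic:iv}) we have $\wq \aq_1 \uq \overset{*}{\to} \can(\wq \aq_1 \uq)$. The whole proof rests on one invariant, which we establish by induction on the number of rewriting steps:
\[
\text{if } \wq \aq_1 \uq \overset{k}{\to} \vq, \text{ then } \vq = \wq \aq_1 \uq' \text{ for some } \uq' \leq \uq .
\]
In words, every word reachable from $\wq \aq_1 \uq$ keeps the prefix $\wq \aq_1$ untouched and only shortens the part after $\aq_1$ to a quasi-subword. Taking $\vq = \can(\wq\aq_1\uq)$ then gives the statement with $\uq^* = \uq'$, by transitivity of $\leq$.

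For the inductive step, suppose $\vq = \wq \aq_1 \uq'$ with $\uq' \leq \uq$, and $\vq \to \vq'$ via some deletion on a subword $\aq_i \sq \aq_i$. Since $\uq' \in W(\{\aq_2,\dots,\aq_n\})$, the only occurrence of $\aq_1$ in $\vq$ is the distinguished one. We place the two marked letters $\aq_i$ and treat three cases.
\begin{itemize}
\item \emph{Both marked letters lie in $\wq$.} Then $\aq_i\sq\aq_i$ is a subword of $\wq$, so $\wq$ admits a deletion. A word admitting a deletion is not of minimal length in its class, so $\wq$ would not be canonical by Theorem~\ref{theorem:1}, a contradiction.
\item \emph{One marked letter lies in $\wq$ and the other in $\uq'$.} Both equal $\aq_i$ with $i \geq 2$, and $\sq$ contains the letter $\aq_1$. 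A right deletion would require $\sq \in W(\{\aq_1,\dots,\aq_{i-1}\})$, and a left deletion would require $\sq \in W(\{\aq_{i+1},\dots,\aq_n\})$. Since $1 < i$, the second is impossible. The first is excluded because $\sq$ contains the tail of $\wq$ after the first marked letter, and that tail may contain letters $\aq_j$ with $j > i$. This is the delicate point: a right deletion here is in fact possible whenever $\sq$ uses only indices smaller than $i$. In that case it removes the second $\aq_i$, which lies in $\uq'$. The result is $\wq\aq_1\uq''$ with $\uq''$ obtained from $\uq'$ by deleting one letter, so $\uq''\leq\uq'\leq\uq$ and the invariant survives. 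The forbidden direction, a left deletion that would remove the first $\aq_i$ (inside $\wq$), needs $\sq\subseteq\{\aq_{i+1},\dots\}$, which fails because $\aq_1\in\sq$.
\item \emph{Both marked letters lie in $\uq'$.} The deletion removes a letter of $\uq'$, again giving $\wq\aq_1\uq''$ with $\uq''\leq\uq'$.
\end{itemize}
The same pattern covers every case: any deletion that removes a letter from $\wq$ or removes $\aq_1$ itself is impossible. Such a deletion would either sit entirely inside $\wq$, contradicting canonicity, or would need $\sq$ to avoid $\aq_1$ while $\sq$ contains $\aq_1$ (note also that $\aq_1$ is never a marked letter, since it occurs only once).

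The main obstacle I expect is organising this case analysis cleanly, in particular the mixed case. There one must check that a deletion spanning the letter $\aq_1$ can only delete its right-hand $\aq_i$, and that it is a right deletion, which is the one type allowed to have $\aq_1\in\sq$. Once that is settled, the induction closes and the lemma follows.
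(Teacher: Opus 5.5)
Your proposal is correct and is essentially the paper's proof. It uses the same invariant, that every word reachable from $\wq\aq_1\uq$ has the form $\wq\aq_1\uq'$ with $\uq'\leq\uq$, and the same case split on where the two marked occurrences of $\aq_i$ lie, so the only fix needed is expository: in the mixed case, drop the sentence asserting that a right deletion is ``excluded,'' since you immediately (and correctly) show it can occur and only removes the $\aq_i$ lying in $\uq'$.
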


\begin{proof}
    If $\wq \aq_1 \uq$ is already canonical, the conclusion follows from $\uq \leq \uq$.
    Otherwise, there exist $k \geq 1$ and words 
    $\vq_1, \dots, \vq_{k-1} \in W(A)$ such that
    \begin{equation*}
        \wq \aq_1 \uq \to \vq_1 \to \dots \to \vq_{k-1} \to \can(\wq \aq_1 \uq). 
    \end{equation*}
    Since $\leq$ is transitive, it suffices to show that whenever 
    $\vq' = \wq \aq_1 \uq'$ with $\uq' \leq \uq$ and 
    $\vq' \to \vq''$, there exists $\uq'' \leq \uq'$ such that $\vq'' = \wq \aq_1 \uq''$.

    Assume $\vq' = \wq \aq_1 \uq'$ with $\uq' \leq \uq$ and $\vq' \to \vq''$.
    By the definition of $\to$, there exist $i \in \{1, 2, \dots, n\}$ and words
    $\wq_1, \wq_2, \sq \in W(A)$ such that 
    $\vq' = \wq_1 \aq_i \sq \aq_i \wq_2$ and one of the following holds:
    \begin{enumerate}
        \item $\vq'' = \wq_1 \aq_i \sq \wq_2$ and $\sq \in W(\{\aq_1, \aq_2, \dots, \aq_{i-1}\})$,
        \item $\vq'' = \wq_1 \sq \aq_i \wq_2$ and $\sq \in W(\{\aq_{i+1}, \aq_{i+2}, \dots, \aq_n\})$.
    \end{enumerate}
    Since $\uq' \leq \uq$ and $\uq \in W(\{\aq_2, \aq_3, \dots, \aq_n\})$, the letter $\aq_1$ does not occur in $\uq'$.
    Since $\aq_1$ also does not occur in $\wq$, it occurs exactly once in $\wq \aq_1 \uq' = \vq'$, and hence $i > 1$.
    Both occurrences of $\aq_i$ from the definition of $\to$ cannot lie in $\wq$, since this would produce the word
    $\aq_i \sq \aq_i$ with $\sq \in W(\{\aq_1, \dots, \aq_{i-1}\})$ or $\sq \in W(\{\aq_{i+1}, \dots, \aq_n\})$
    as a subword of $\wq$, which would contradict the assumption that $\wq$ is canonical.
    Therefore, at least one of the two occurrences of $\aq_i$ from the definition of $\to$ lies in $\uq'$. 
    
    If both occurrences of $\aq_i$ from the definition of $\to$ lie in $\uq'$, the reduction $\vq' \to \vq''$ deletes
    one occurrence of $\aq_i$ from $\uq'$, while the prefix $\wq \aq_1$ remains unchanged. 
    Let $\uq''$ be the word obtained from $\uq'$ by deleting this occurrence of $\aq_i$.
    Then we have $\vq'' = \wq \aq_1 \uq''$ and $\uq'' \leq \uq'$.

    If only one occurrence of $\aq_i$ from the definition of $\to$ lies in $\uq'$, then it must be the right occurrence, and the left occurrence of $\aq_i$ lies in $\wq$, since $i > 1$.
    This implies that $\aq_1$ occurs in the subword $\sq$.
    Therefore, we cannot have $\sq \in W(\{\aq_{i+1}, \dots, \aq_n\})$, and hence we are in the first case of the definition of $\to$.
    Thus, in the reduction $\vq' \to \vq''$, we use a right deletion to delete the occurrence of $\aq_i$ in $\uq'$,
    while the prefix $\wq \aq_1$ remains unchanged.
    Let $\uq''$ be the word obtained by deleting the mentioned occurrence of $\aq_i$ from $\uq'$.
    We then have $\vq'' = \wq \aq_1 \uq''$ and $\uq'' \leq \uq'$.
    This concludes the proof.
\end{proof}

The following theorem provides a key step in proving the main result of this section.
It asserts that if $\wq$ is canonical and contains $\aq_1$, and $\uq$ does not contain $\aq_1$, then the prefix preceding $\aq_1$ in $\wq$ coincides with the prefix
preceding $\aq_1$ in $\can(\wq \uq)$.
\begin{theorem}\label{main}
    Let $\wq \in W(A)$ be canonical and let $\uq, \vq_1, \vq_2 \in W(\{\aq_2, \aq_3, \dots, \aq_n\})$. If we have 
    $\can(\wq \uq) = \vq_1 \aq_1 \vq_2$,
    then $\wq = \vq_1 \aq_1 \vq'$ for some $\vq' \in W(\{\aq_2, \aq_3, \dots, \aq_n\})$.
\end{theorem}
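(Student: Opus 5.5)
The plan is to reduce the statement to Lemma~\ref{lc0} by locating the unique occurrence of $\aq_1$ in $\wq$. The key observation is that a canonical word contains $\aq_1$ at most once. Indeed, if $\aq_1 \sq \aq_1$ were a subword of a canonical word, the definition of canonical would require some $\aq_k$ with $k < 1$ to occur in $\sq$, which is impossible. This applies to any occurrences, adjacent or not, since any two consecutive occurrences of $\aq_1$ bound such a subword.

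First I show that $\aq_1$ occurs in $\wq$. By hypothesis $\aq_1$ occurs in $\can(\wq\uq) = \vq_1\aq_1\vq_2$. By Remark~\ref{remark:a_i_occur} it then occurs in $\wq\uq$. Since $\uq \in W(\{\aq_2, \dots, \aq_n\})$, the occurrence must lie in $\wq$. By the observation above it occurs exactly once there, so I can write $\wq = \wq_1 \aq_1 \wq_2$ with $\wq_1, \wq_2 \in W(\{\aq_2, \dots, \aq_n\})$. Moreover $\wq_1$ is canonical, being a subword of the canonical word $\wq$.

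Next I apply Lemma~\ref{lc0} with the canonical word $\wq_1$ in place of $\wq$ and the word $\wq_2\uq \in W(\{\aq_2, \dots, \aq_n\})$ in place of $\uq$. This gives
\[
\can(\wq\uq) = \can(\wq_1 \aq_1 \wq_2 \uq) = \wq_1 \aq_1 \uq^*
\]
for some $\uq^* \leq \wq_2\uq$. In particular $\uq^*$ contains no $\aq_1$.

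Finally I compare the two expressions $\vq_1\aq_1\vq_2 = \wq_1\aq_1\uq^*$ for the same word. In each, the letter $\aq_1$ occurs exactly once and every other factor is free of $\aq_1$. Splitting at that unique occurrence therefore forces $\vq_1 = \wq_1$ (and $\vq_2 = \uq^*$). Hence $\wq = \vq_1\aq_1\vq'$ with $\vq' = \wq_2 \in W(\{\aq_2, \dots, \aq_n\})$, as required.

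The substantive work has already been done in Lemma~\ref{lc0}, so I expect no genuine obstacle. The only point requiring care is that $\aq_1$ appears in $\wq$ exactly once. This rests on two facts: the canonicity condition cannot be satisfied for $i = 1$, and $\aq_1$ cannot disappear from $\wq\uq$ or be created by reduction, which is covered by Remark~\ref{remark:a_i_occur} and Lemma~\ref{lc0}.
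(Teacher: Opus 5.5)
Your proof is correct and follows the paper's argument step for step: you use Remark~\ref{remark:a_i_occur} to show $\aq_1$ occurs in $\wq$, split $\wq = \wq_1\aq_1\wq_2$, apply Lemma~\ref{lc0} to $\wq_1\aq_1(\wq_2\uq)$, and compare the prefixes before the unique $\aq_1$. The only cosmetic difference is how you show that $\aq_1$ occurs at most once in a canonical word: you derive it directly from the definition (no index $k<1$ exists), whereas the paper cites Proposition~\ref{prop:word_bound}~(\ref{prop:word_bound:i}) with $i=1$.
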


\begin{proof}
    Since $\can(\wq \uq) = \vq_1 \aq_1 \vq_2$, Lemma~\ref{can_quasi} implies $\vq_1 \aq_1 \vq_2 \leq \wq \uq$.
    Since $\aq_1 \leq \vq_1 \aq_1 \vq_2 \leq \wq \uq$, Remark~\ref{remark:a_i_occur} further implies that the letter $\aq_1$ occurs in $\wq \uq$.
    As $\uq \in W(\{\aq_2, \aq_3, \dots, \aq_n\})$, the letter $\aq_1$ occurs at least once in $\wq$.
    Since $\wq$ is canonical, the letter $\aq_1$ occurs 
    exactly once in $\wq$ by Proposition~\ref{prop:word_bound}~(\ref{prop:word_bound:i}).
     
    Since $\aq_1$ occurs exactly once in $\wq$, we can write $\wq$ as
    \begin{equation*}
        \wq = \wq_1 \aq_1 \wq_2,
    \end{equation*}
    for some $\wq_1, \wq_2 \in W(\{\aq_2, \dots, \aq_n\})$. Hence, the word $\wq \uq$ is of the form 
    $\wq \uq = \wq_1 \aq_1 (\wq_2 \uq)$. Since $\wq_1 \in W(\{\aq_2, \dots, \aq_n\})$
    is canonical, as it is a subword of the canonical word $\wq$, and since $\wq_2 \uq \in W(\{\aq_2, \dots, \aq_n\})$,
    we can use Lemma~\ref{lc0} to conclude that $\can(\wq \uq) = \can(\wq_1 \aq_1 \wq_2 \uq) = \wq_1 \aq_1 \uq^*$ for some $\uq^* \leq \wq_2 \uq$. 
    Thus, we have 
    \begin{equation*}
        \wq_1 \aq_1 \uq^* = \can(\wq \uq) = \vq_1 \aq_1 \vq_2.
    \end{equation*}
    Since $\wq_1, \vq_1 \in W(\{\aq_2, \aq_3, \dots, \aq_n\})$, we must have $\wq_1 = \vq_1$
    and $\uq^* = \vq_2$. Hence, $\wq = \wq_1 \aq_1 \wq_2 = \vq_1 \aq_1 \wq_2$. Setting $\vq' = \wq_2$ yields 
    the desired conclusion.
\end{proof}

\begin{theorem}\label{ct0}
    Let $\wq \in W(A)$ be canonical and let $\uq \in W(\{\aq_2, \aq_3, \dots, \aq_n\})$. 
    If $\can(\wq \uq) = \fq$, then $\wq = \fq$.
\end{theorem}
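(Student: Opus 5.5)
We apply Theorem~\ref{main} directly, taking $\vq_1 = \eq_{\{2, 3, \dots, n\}} = \aq_n \aq_{n-1} \cdots \aq_2$ and $\vq_2 = \eq$. Since $\fq = \eq_{\{2,3,\dots,n\}} \aq_1$, the hypothesis $\can(\wq \uq) = \fq$ has the form $\can(\wq\uq) = \vq_1 \aq_1 \vq_2$ with $\vq_1, \vq_2 \in W(\{\aq_2, \dots, \aq_n\})$. Theorem~\ref{main} then gives
\[
\wq = \aq_n \aq_{n-1} \cdots \aq_2 \aq_1 \vq' = \fq \vq'
\]
for some $\vq' \in W(\{\aq_2, \aq_3, \dots, \aq_n\})$.

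Next we use that $\wq$ is canonical. By Remark~\ref{rm1}, the word $\fq \vq'$ is canonical only if $\vq'$ is empty. Hence $\vq' = \eq$, and therefore $\wq = \fq$.

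The only step that needs care is checking that the hypotheses of Theorem~\ref{main} hold exactly. The words $\vq_1$ and $\vq_2$ must avoid $\aq_1$, which they do. We must also have $n \geq 1$, so that $\aq_1$ genuinely occurs in $\fq$; this holds since $n$ is a positive integer. For $n = 1$ we get $\vq_1 = \eq$, and the argument goes through unchanged. Beyond this, no real obstacle is expected: the substantive work was done in Theorem~\ref{main} and Remark~\ref{rm1}.
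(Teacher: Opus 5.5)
Your proposal is correct and follows essentially the same route as the paper: apply Theorem~\ref{main} with $\vq_1 = \eq_{\{2, 3, \dots, n\}}$ and $\vq_2$ empty to get $\wq = \fq \vq'$, then use Remark~\ref{rm1} and the canonicity of $\wq$ to force $\vq'$ to be empty. Your extra check of the case $n = 1$ is harmless and also correct.
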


\begin{proof}
    We apply Theorem~\ref{main} with $\vq_1 = \eq_{\{2, 3, \dots, n\}}$ and 
    $\vq_2$ the empty word, and conclude that
    \begin{equation*}
        \wq = \eq_{\{2, 3, \dots, n\}} \aq_1 \vq' = \fq \vq',
    \end{equation*}
    for some $\vq' \in W(\{\aq_2, \aq_3, \dots, \aq_n\})$.
    
    Using Remark~\ref{rm1}, we see that 
    $\wq = \fq \vq'$ is canonical if and only if $\vq'$ is the empty word.
    The canonicity assumption on $\wq$ yields that
    $\vq'$ is indeed the empty word. Hence, we have $\wq = \fq$, which
    concludes the proof of the theorem.
\end{proof}

Now the proof of the Zero Cancellation Theorem follows easily.
\begin{proof}[Proof of Theorem~\ref{ct1}]
    
    (\ref{ct1:i}) Write $x = \varphi(\wq)$ and $y = \varphi(\wq')$,
    where $\wq$ and $\wq'$ are the canonical forms of $x$ and $y$, respectively. 
    Since $c(y) \subseteq \{2, 3, \dots, n\}$, we have 
    $\wq' \in W(\{\aq_2, \aq_3, \dots, \aq_n\})$.
    Since $\varphi(\wq \wq') = xy = f = \varphi(\fq)$ and
    $\fq$ is a canonical word, it follows that
    $\can(\wq \wq') = \fq$. As 
    $\wq$ is canonical and 
    $\wq' \in W(\{\aq_2, \aq_3, \dots, \aq_n\})$, we can apply
    Theorem~\ref{ct0} and conclude that 
    $\wq = \fq$, and hence $x=f$.

    (\ref{ct1:ii}) Define $x' = \tau(x)$ and $y' = \tau(y)$. 
    Since $c(x) \subseteq \{1, 2, \dots, n-1\}$, we have 
    $c(x') \subseteq \{2, 3, \dots, n\}$. We also have
    \begin{align*}
        y' x' = \tau(y) \tau(x) = \tau(xy) = \tau(f) = f,
    \end{align*}
    where the last equality holds by Remark~\ref{tau_rm}.
    Therefore, by~(\ref{ct1:i}), we conclude that $\tau(y) = y' = f$. 
    Since $\tau$ is injective and 
    $\tau(f) = f$, it follows that $y = f$.
\end{proof}

\begin{corollary}\label{ct2}
    Let $x \in K_n$ and let
    $k \in \{2, 3, \dots, n\}$ and $l \in \{1, 2, \dots, n-1\}$.
    If $x a_k = f$ or $a_l x = f$, then $x=f$. 
\end{corollary}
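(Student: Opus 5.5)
This is an immediate application of Theorem~\ref{ct1}, and the plan is simply to compute the content of the generator involved and invoke the appropriate part.

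For the equation $x a_k = f$ with $k \in \{2, 3, \dots, n\}$, set $y = a_k$. The canonical form of $a_k$ is the one-letter word $\aq_k$. This word is trivially canonical, since it contains no subword of the form $\aq_i \uq \aq_i$, so $\zeta(a_k) = \aq_k$. Hence $c(a_k) = \{k\} \subseteq \{2, 3, \dots, n\}$, and Theorem~\ref{ct1}~(\ref{ct1:i}) applied to $x y = f$ gives $x = f$.

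For the equation $a_l x = f$ with $l \in \{1, 2, \dots, n-1\}$, the roles are reversed. We put $a_l$ in the left position, so that $a_l x = f$ has the form $x' y' = f$ with $x' = a_l$ and $y' = x$. By the same observation, $c(a_l) = \{l\} \subseteq \{1, 2, \dots, n-1\}$. Theorem~\ref{ct1}~(\ref{ct1:ii}) therefore yields $y' = f$, that is, $x = f$.

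There is no real obstacle. The only point worth stating explicitly is that the canonical form of a generator $a_i$ is the single letter $\aq_i$, so that $c(a_i) = \{i\}$. This follows from Proposition~\ref{prop:basic}~(\ref{prop:basic:ii}), because a one-letter word vacuously satisfies the definition of canonicity.
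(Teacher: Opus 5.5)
Your proof is correct and matches the paper's argument: it applies Theorem~\ref{ct1}~(\ref{ct1:i}) with $y = a_k$ and Theorem~\ref{ct1}~(\ref{ct1:ii}) with $a_l$ in the left factor. Your extra justification that $c(a_i) = \{i\}$, because the one-letter word $\aq_i$ is canonical, is a detail the paper leaves implicit.
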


\begin{proof}
    If $x a_k = f$, then $c(a_k) \subseteq \{2, 3, \dots, n\}$ and Theorem~\ref{ct1}~(\ref{ct1:i}) imply $x = f$.
    If $a_l x = f$, then $c(a_l) \subseteq \{1, 2, \dots, n-1\}$ and Theorem~\ref{ct1}~(\ref{ct1:ii}) imply $x = f$.
\end{proof}

\begin{corollary}
    Let $x, y, z \in K_n$ and assume that
    $c(x) \subseteq \{1, 2, \dots, n-1\}$ and $c(z) \subseteq \{2, 3, \dots, n\}$. 
    If $x y z = f$, then $y = f$.
\end{corollary}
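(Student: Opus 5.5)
The plan is to apply the two parts of the Zero Cancellation Theorem (Theorem~\ref{ct1}) in succession, using associativity to group the product $xyz$ in two different ways.

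First, group the product as $(xy)z = f$. Set $x' = xy$ and $y' = z$. The hypothesis $c(z) \subseteq \{2, 3, \dots, n\}$ is exactly the condition of Theorem~\ref{ct1}~(\ref{ct1:i}) for the right factor $y'$. That part of the theorem therefore gives $x' = xy = f$.

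Second, apply Theorem~\ref{ct1}~(\ref{ct1:ii}) to the equation $xy = f$. Here the left factor $x$ satisfies $c(x) \subseteq \{1, 2, \dots, n-1\}$, so the theorem yields $y = f$, as required.

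The argument has no genuine obstacle. The only point to check is that Theorem~\ref{ct1} places no restriction on the content of the other factor: part~(\ref{ct1:i}) constrains only $y$, and part~(\ref{ct1:ii}) constrains only $x$. This is what allows us to use the unrestricted element $xy$ as the left factor in the first step and $y$ as the right factor in the second.
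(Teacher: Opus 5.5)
Your proof is correct and follows the paper's argument exactly. You group the product as $(xy)z = f$, apply Theorem~\ref{ct1}~(\ref{ct1:i}) to obtain $xy = f$, and then apply Theorem~\ref{ct1}~(\ref{ct1:ii}) using $c(x) \subseteq \{1, 2, \dots, n-1\}$ to conclude $y = f$.
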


\begin{proof}
    Since $(xy)z = f$ and 
    $c(z) \subseteq \{2, 3, \dots, n\}$,
    Theorem~\ref{ct1}~(\ref{ct1:i}) implies that $xy = f$. As $c(x) \subseteq \{1, 2, \dots, n-1\}$,
    Theorem~\ref{ct1}~(\ref{ct1:ii}) yields
    $y = f$.
\end{proof}

We call Theorem~\ref{ct1} the Zero Cancellation Theorem since we can rewrite 
the equality $xy = f$ as $xy = fy = xf$, and Theorem~\ref{ct1} states that
we may cancel out $x$ or $y$ under the mentioned content restrictions.
The term zero refers to
the zero element $f$ of $K_n$.

We conclude this section with an interesting characterisation of the zero 
element $f$ of $K_n$.
\begin{corollary}
    Let $x \in K_n$. Then $x = f$ if and only if 
    $xa_k = f$ for some $k \in \{2, 3, \dots, n\}$.
\end{corollary}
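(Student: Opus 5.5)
The statement is a biconditional, and I will prove each direction separately, using only that $f$ is a zero element and Corollary~\ref{ct2}.

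For the forward direction, assume $x = f$. Since $n \geq 2$ is implicitly needed for the set $\{2, 3, \dots, n\}$ to be nonempty, I would take $k = 2$, or indeed any $k$ in that set. Because $f$ is the zero element of $K_n$, recalled in Section~\ref{IDE} from~\cite[Remark~16]{kudryavtseva}, we get $x a_k = f a_k = f$. This gives the existence of the required $k$.

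For the converse, assume $x a_k = f$ for some $k \in \{2, 3, \dots, n\}$. This is exactly the first hypothesis of Corollary~\ref{ct2}, which yields $x = f$ immediately. Equivalently, one could invoke Theorem~\ref{ct1}~(\ref{ct1:i}) with $y = a_k$, noting that $c(a_k) = \{k\} \subseteq \{2, 3, \dots, n\}$ because the one-letter word $\aq_k$ is canonical.

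There is no real obstacle here. The only point that needs a remark is the degenerate case $n = 1$: the set $\{2, 3, \dots, n\}$ is then empty, so the right-hand side is false. I would either assume $n \geq 2$ or note that the characterisation is intended for $n \geq 2$. The nontrivial content is inherited entirely from Theorem~\ref{ct0}.
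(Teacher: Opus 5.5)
Your proof is correct and follows the paper's: the forward direction uses that $f$ is the zero element, and the converse is exactly Corollary~\ref{ct2}. Your remark that the statement implicitly requires $n \geq 2$ is also right, and the paper leaves it unstated: for $n = 1$ the set $\{2, 3, \dots, n\}$ is empty, so the equivalence fails at $x = f$.
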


\begin{proof}
    If $x = f$, we have 
    $x a_k = f$ for all $k \in \{1, 2, \dots, n\}$. 
    The converse follows immediately from Corollary~\ref{ct2}.
\end{proof}

\section{The Equation $x a_1 = f$}\label{TE}

In this section, we study the equation $x a_1 = f$ in $K_n$.
Applying $\tau$ to this equation yields the equivalent problem of determining all $x \in K_n$ such that $a_n x = f$.

Corollary~\ref{ct2} states that for $k \in \{2, 3, \dots, n\}$, the equation 
$x a_k = f$ has the unique solution $x = f$.
However, Corollary~\ref{ct2} does not hold for $k = 1$.
Indeed, for $x = e_{\{2, 3, \dots, n\}} \neq f$, we also have $x a_1 = f$. 
Hence, the set of solutions to the equation $x a_1 = f$ is not the
singleton $\{f\}$. In general, we have the following result.
\begin{proposition}
    The equation $x y = f$ admits only the trivial solution $x = f$ if and only if $c(y) \subseteq \{2, 3, \dots, n\}$.
\end{proposition}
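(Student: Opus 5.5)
The proof splits into the two implications.

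The direction "if $c(y) \subseteq \{2, 3, \dots, n\}$, then only $x = f$ solves $xy = f$" is exactly Theorem~\ref{ct1}~(\ref{ct1:i}). Note that $x = f$ is always a solution, since $f$ is the zero element, so the solution set is precisely $\{f\}$.

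For the converse, we argue by contraposition. Assume $1 \in c(y)$. We will exhibit the explicit non-trivial solution $x = e_{\{2, 3, \dots, n\}}$. The key steps are as follows.

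\begin{enumerate}
    \item First show $x \neq f$. This holds because $e_X \neq e_Y$ for $X \neq Y$, as recalled in Section~\ref{IDE}. Alternatively, $c(x) = \{2, \dots, n\}$ while $c(f) = \{1, \dots, n\}$. For $n = 1$, $x = e$ and $f = a_1$, and these are distinct as well.
    \item Next show $xy = f$. Let $\zeta(y) = \vq_1 \aq_1 \vq_2$, where $\vq_1, \vq_2 \in W(\{\aq_2, \dots, \aq_n\})$. Such a factorisation exists since $\aq_1$ occurs exactly once in the canonical form $\zeta(y)$, by Proposition~\ref{prop:word_bound}~(\ref{prop:word_bound:i}) together with $1 \in c(y)$. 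Then
\[
xy = e_{\{2,\dots,n\}}\,\varphi(\vq_1)\, a_1\, \varphi(\vq_2).
\]
    \item It remains to see that $e_{\{2,\dots,n\}}\varphi(\vq_1) = e_{\{2,\dots,n\}}$, i.e.\ that $e_{\{2,\dots,n\}}$ is a left zero for $\langle a_2, \dots, a_n\rangle$ (and the unit $e$, in case $\vq_1 = \eq$). This is the zero property of $K_{n-1}$ applied to the generators $a_2, \dots, a_n$. More carefully, those generators satisfy the Kiselman relations, so $\langle a_2,\dots,a_n\rangle$ is a homomorphic image of $K_{n-1}$ under $a_i \mapsto a_{i+1}$. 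The zero $e_{\{1,\dots,n-1\}}$ of $K_{n-1}$ maps to $e_{\{2,\dots,n\}}$, and a homomorphic image of a zero is a zero of the image.
    \item Hence $xy = e_{\{2,\dots,n\}} a_1 \varphi(\vq_2) = f\varphi(\vq_2) = f$, using $e_{\{2,\dots,n\}}a_1 = f$ and that $f$ is the zero of $K_n$.
\end{enumerate}

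The only mildly delicate point is the third step. If one prefers to avoid the homomorphism argument, it can be done directly in words: $\eq_{\{2,\dots,n\}}\wq \overset{*}{\to} \eq_{\{2,\dots,n\}}$ for every $\wq \in W(\{\aq_2,\dots,\aq_n\})$, which is the analogue of Remark~\ref{rm1} for the alphabet $\{\aq_2, \dots, \aq_n\}$. For a single letter $\aq_j$, the word $\eq_{\{2,\dots,n\}}\aq_j$ contains $\aq_j \sq \aq_j$ where $\sq = \aq_{j-1}\cdots\aq_2$ consists only of smaller letters, so a right deletion removes the trailing $\aq_j$. The general case follows by induction on the length of $\wq$.
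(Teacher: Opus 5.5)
Your proof is correct and follows essentially the paper's argument. The forward direction is Theorem~\ref{ct1}~(\ref{ct1:i}); for the converse you exhibit $x = e_{\{2,3,\dots,n\}}$ by splitting $\zeta(y)$ at its unique $\aq_1$ and absorbing the prefix into $e_{\{2,3,\dots,n\}}$, which the paper justifies via Lemma~\ref{lemma:shorten}~(\ref{lemma:shorten:i}), exactly your right-deletion argument, and your explicit check that $e_{\{2,3,\dots,n\}} \neq f$ (including $n=1$) supplies a point the paper leaves implicit.
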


\begin{proof}
    By Theorem~\ref{ct1}~(\ref{ct1:i}), if $c(y) \subseteq \{2, 3, \dots, n\}$, then $x = f$ is the unique solution to $xy = f$.

    Conversely, if $1 \in c(y)$, Proposition~\ref{prop:word_bound}~(\ref{prop:word_bound:i}) implies that the canonical form of $y$ is of the form
    $\wq \aq_1 \uq$, where $\wq, \uq \in W(\{\aq_2, \aq_3, \dots,\aq_n\})$.
    Setting $z = \varphi(\wq)$ and $y' = \varphi(\uq)$, we obtain
    \begin{equation*}
        y = z a_1 y',
    \end{equation*}
    where $z, y' \in K_n$ and $c(z), c(y') \subseteq \{2, 3, \dots, n\}$.
    By Lemma~\ref{lemma:shorten}~(\ref{lemma:shorten:i}), we have $e_{\{2, 3, \dots, n\}} z = e_{\{2, 3, \dots, n\}}$, and hence 
    \begin{equation*}
        e_{\{2, 3, \dots, n\}} y = e_{\{2, 3, \dots, n\}} z a_1 y' = e_{\{2, 3, \dots, n\}} a_1 y' = f y' = f.
    \end{equation*}
    Thus, $x = e_{\{2, 3, \dots, n\}}$ is a non-trivial solution to $xy = f$.
\end{proof}

We focus on the case when $y = a_1$. We denote by
\begin{equation*}
    R = \{x \in K_n \mid x a_1 = f\}
\end{equation*}
the set of all solutions to $x a_1 = f$ and
we also denote by 
\begin{equation*}
    K_n^1 = \{x \in K_n \mid 1 \in c(x)\}
\end{equation*}
the set of all $x \in K_n$ whose canonical form contains the letter $\aq_1$.
If $x, y \in K_n^1$, then $1 \in c(x) \cup c(y) = c(xy)$, which shows $xy \in K_n^1$.
Therefore, $K_n^1$ is a subsemigroup of $K_n$.

For $i_1, i_2, \dots, i_k \in \{1, 2, \dots, n\}$, we denote by
$\langle a_{i_1}, a_{i_2}, \dots, a_{i_k} \rangle$ the subsemigroup of $K_n$ generated by $\{a_{i_1}, a_{i_2}, \dots, a_{i_k}\}$.
If $i_1, i_2, \dots, i_k$ are pairwise distinct, then $\langle a_{i_1}, a_{i_2}, \dots, a_{i_k} \rangle$ is isomorphic to $K_k$.
The subsemigroups $K_n^1$ and $\langle a_2, a_3, \dots, a_n \rangle$ are disjoint and their union is $K_n$.
Hence, defining
\begin{equation*}
    T = \{x \in K_n^1 \mid x a_1 = f\},
\end{equation*}
we can write $R$ as
\begin{equation*}
    R = \{x \in \langle a_2, a_3, \dots, a_n \rangle \mid x a_1 = f\} \cup T
\end{equation*}
and this union is disjoint.

The following lemma determines solutions to $x a_1 = f$ within $\langle a_2, a_3, \dots, a_n \rangle$.
\begin{lemma}\label{eqlemma}
    Let $x \in \langle a_2, a_3, \dots, a_n \rangle$ satisfy $x a_1 = f$.
    Then $x = e_{\{2, 3, \dots, n\}}$.
\end{lemma}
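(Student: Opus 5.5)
Let $\wq = \zeta(x)$ be the canonical form of $x$. Since $x \in \langle a_2, a_3, \dots, a_n \rangle$, we have $c(x) \subseteq \{2, 3, \dots, n\}$, so $\wq \in W(\{\aq_2, \aq_3, \dots, \aq_n\})$. Because $\wq$ contains no $\aq_1$ and is canonical, Lemma~\ref{lc0}, applied with $\uq$ the empty word, gives
\[
\can(\wq \aq_1) = \wq \aq_1 \uq^*
\]
with $\uq^* \leq \eq$. Hence $\uq^* = \eq$ and $\wq \aq_1$ is itself canonical.

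On the other hand, $\varphi(\wq \aq_1) = x a_1 = f = \varphi(\fq)$, and $\fq$ is canonical. By Proposition~\ref{prop:basic}, $\can(\wq \aq_1) = \fq = \eq_{\{2, 3, \dots, n\}} \aq_1$. Comparing the two expressions for $\can(\wq\aq_1)$ gives
\[
\wq \aq_1 = \eq_{\{2, 3, \dots, n\}} \aq_1 .
\]
Cancelling the final letter yields $\wq = \eq_{\{2, 3, \dots, n\}}$. Therefore $x = \varphi(\wq) = e_{\{2, 3, \dots, n\}}$; equivalently, by Lemma~\ref{lemma:canonical_unique}, $x$ is determined by its canonical form.

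An alternative route is Theorem~\ref{main}: take $\wq$ there to be $\zeta(x)\aq_1$, which is canonical by the first step, with $\uq = \eq$, $\vq_1 = \eq_{\{2,3,\dots,n\}}$ and $\vq_2 = \eq$. Invoking Lemma~\ref{lc0} directly is shorter, though.

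The only step needing care is confirming that Lemma~\ref{lc0} applies with $\uq$ empty, which its hypotheses allow. Everything else is bookkeeping with canonical forms, so I expect no real obstacle.
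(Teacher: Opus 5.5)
Your proposal is correct and follows essentially the same argument as the paper: show $\zeta(x)\aq_1$ is canonical, conclude $\zeta(x)\aq_1 = \fq = \eq_{\{2,3,\dots,n\}}\aq_1$, and strip the final letter. The only difference is that you justify the canonicity of $\zeta(x)\aq_1$ by applying Lemma~\ref{lc0} with $\uq$ empty. The paper observes it directly: $\aq_1$ occurs only once, so any offending subword $\aq_i\sq\aq_i$ would already lie inside the canonical word $\zeta(x)$.
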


\begin{proof}
    We write $x = \varphi(\wq)$, 
    where $\wq \in W(\{\aq_2, \aq_3, \dots, \aq_n\})$ is the canonical form of $x$. 
    As $x a_1 = f$, we have $\varphi(\wq \aq_1) = x a_1 = f = \varphi(\fq)$, and since $\fq$ is canonical, we obtain $\can(\wq \aq_1) = \fq$.
    Since $\wq$ is canonical and $\aq_1$ does not occur in $\wq$, the word $\wq \aq_1$ is already canonical.
    Hence $\wq \aq_1 = \fq$.
    It follows that $\wq = \eq_{\{2, 3, \dots, n\}}$, which implies $x = e_{\{2, 3, \dots, n\}}$.
\end{proof}

By Lemma~\ref{eqlemma}, the set $R$ is equal to
\begin{equation}\label{R_decomp}
    R = \{e_{\{2, 3, \dots, n\}}\} \cup T
\end{equation}
and this union is disjoint.
We now turn to the study of the set $T$.
Observe that if $x \in K_n^1$ and $\wq$ is the canonical form of $x$, then by Proposition~\ref{prop:word_bound}~(\ref{prop:word_bound:i}), the letter 
$\aq_1$ occurs exactly once in $\wq$. This allows us to define the following map.
\begin{definition}\label{def:pi}
    Define the map $\pi : K_n^1 \to \langle a_2, a_3, \dots, a_n \rangle$ as follows.
    For $x \in K_n^1$, let $\wq \aq_1 \uq$ be its canonical form.
    Then set $\pi(x) = \varphi(\wq)$.
\end{definition}

The map $\pi$ is not a semigroup homomorphism. Indeed, let 
$n \geq 2$ and let $x = a_1 a_2$ and 
$y = a_1$. Then we have $x, y \in K_n^1$ and
$xy = a_2 a_1$. We have 
$\pi(x) = e$ and $\pi(y) = e$. However,
$\pi(xy) = a_2 \neq \pi(x) \pi(y)$.

\begin{proposition}\label{prop:res}
    We have that $\pi(T) = \langle a_2, a_3, \dots, a_n \rangle$.
\end{proposition}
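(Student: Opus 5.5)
The inclusion $\pi(T) \subseteq \langle a_2, a_3, \dots, a_n \rangle$ is immediate from Definition~\ref{def:pi}: for any $x \in K_n^1$ with canonical form $\wq \aq_1 \uq$, the word $\wq$ lies in $W(\{\aq_2, \dots, \aq_n\})$, so $\pi(x) = \varphi(\wq)$ belongs to $\langle a_2, \dots, a_n \rangle$. Here $\langle a_2, \dots, a_n\rangle$ is understood to contain $e$, the value of $\pi$ when $\wq$ is empty, as is implicit in Definition~\ref{def:pi}. The content of the proposition is therefore the reverse inclusion. For it, I plan to give an explicit preimage of each element rather than argue by counting.

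Fix $z \in \langle a_2, \dots, a_n \rangle$ and let $\wq = \zeta(z)$. This is a canonical word in $W(\{\aq_2, \dots, \aq_n\})$, by Remark~\ref{remark:a_i_occur} applied to any word over $\aq_2, \dots, \aq_n$ representing $z$. I will take
\[
x = z\, a_1\, e_{\{2,3,\dots,n\}}.
\]
First, $x \in R$. Since $e_{\{2,\dots,n\}} a_1 = f$ and $f$ is the zero, we get $x a_1 = z a_1 (e_{\{2,\dots,n\}} a_1) = z a_1 f = f$.

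Second, I need $x \in K_n^1$ and $\pi(x) = z$. I will apply Lemma~\ref{lc0} to the word $\wq \aq_1 \eq_{\{2,\dots,n\}}$. Its hypotheses hold: $\wq$ is canonical and $\eq_{\{2,\dots,n\}} \in W(\{\aq_2, \dots, \aq_n\})$. The lemma gives
\[
\can(\wq \aq_1 \eq_{\{2,\dots,n\}}) = \wq \aq_1 \uq^*
\]
for some $\uq^* \leq \eq_{\{2,\dots,n\}}$. Since this word is the canonical form of $x$, the letter $\aq_1$ occurs in $\zeta(x)$, so $x \in K_n^1$. Together with $x a_1 = f$, this gives $x \in T$. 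Moreover, the prefix before $\aq_1$ in $\zeta(x)$ is exactly $\wq$, so $\pi(x) = \varphi(\wq) = z$.

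This proves $\langle a_2, \dots, a_n \rangle \subseteq \pi(T)$, and hence equality. The only step requiring care is the identification of the canonical form of $x$, and Lemma~\ref{lc0} handles it directly because it guarantees that the prefix $\wq \aq_1$ survives canonicalisation untouched. I therefore expect no real obstacle. The one point to check is that the case $z = e$, where $\wq$ is empty, is covered by the same argument, which it is.
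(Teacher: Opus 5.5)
Your proposal is correct and follows essentially the same route as the paper: you take the explicit preimage $z\,a_1\,e_{\{2,3,\dots,n\}}$, use Lemma~\ref{lc0} to see that its canonical form is $\zeta(z)\,\aq_1\,\uq^*$ (so it lies in $K_n^1$ with $\pi$-value $z$), and check $x a_1 = f$ via $e_{\{2,3,\dots,n\}} a_1 = f$. Your remark that $\langle a_2, \dots, a_n \rangle$ must be read as containing $e$ is also consistent with how the paper uses it, for example in the decomposition $K_n = \langle a_2, \dots, a_n \rangle \cup K_n^1$.
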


\begin{proof}
    Let $x \in \langle a_2, a_3, \dots, a_n \rangle$ and write 
    $x = \varphi(\wq)$, where $\wq \in W(\{\aq_2, \aq_3, \dots, \aq_n\})$ is the canonical form of $x$.
    We define $\vq \in W(A)$ and $z \in K_n$ as 
    \begin{equation*}
        \vq = \wq \aq_1 \eq_{\{2, 3, \dots, n\}} \quad \text{and} \quad z = \varphi(\vq).
    \end{equation*}
    We claim that $z \in T$ and $x = \pi(z)$.

    Since $\wq \in W(\{\aq_2, \aq_3, \dots, \aq_n\})$ is canonical and 
    $\eq_{\{2, 3, \dots, n\}} \in W(\{\aq_2, \aq_3, \dots, \aq_n\})$, by Lemma~\ref{lc0}, we have 
    \begin{equation}\label{323}
        \can(\vq) = \can(\wq \aq_1 \eq_{\{2, 3, \dots, n\}}) = \wq \aq_1 \uq^*, 
    \end{equation}
    for some $\uq^* \leq \eq_{\{2, 3, \dots, n\}}$.
    This implies $z \in K_n^1$.
    Moreover, we have
    \begin{equation*}
        z a_1 = \varphi(\vq) a_1 = \varphi(\wq \aq_1 \eq_{\{2, 3, \dots, n\}}) a_1 = \varphi(\wq) a_1 e_{\{2, 3, \dots, n\}} a_1 
        = \varphi(\wq) a_1 f = f,    
    \end{equation*} 
    showing that $z \in T$.
    By the definition of $\pi$ and~\eqref{323}, we obtain $\pi(z) = \varphi(\wq) = x$. 
    This proves that $x \in \pi(T)$, and hence $\langle a_2, a_3, \dots, a_n \rangle \subseteq \pi(T)$.
    The converse inclusion is immediate from the definition of $\pi$.
    This concludes the proof.
\end{proof}

\begin{proposition}\label{prop:restrict}
    The restriction $\pi|_T : T \to \langle a_2, a_3, \dots, a_n \rangle$ is a bijective map.
\end{proposition}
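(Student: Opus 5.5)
Surjectivity is exactly Proposition~\ref{prop:res}, so the only thing to prove is injectivity of $\pi|_T$. The plan is to show that every $x \in T$ has canonical form $\wq \aq_1 \uq$ with $\wq$ determined by $\pi(x)$ and with $\uq$ forced to be the canonical form of a single fixed element. Proposition~\ref{p:can_injective} then pins down $\uq$.

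Let $x, y \in T$ with $\pi(x) = \pi(y)$. Write $\zeta(x) = \wq \aq_1 \uq$ and $\zeta(y) = \wq' \aq_1 \uq'$, where $\wq, \wq', \uq, \uq' \in W(\{\aq_2, \dots, \aq_n\})$. Since $\wq$ and $\wq'$ are subwords of canonical words, they are canonical. They are also the canonical forms of $\pi(x) = \varphi(\wq)$ and $\pi(y) = \varphi(\wq')$. So $\pi(x) = \pi(y)$ gives $\wq = \wq'$ by Proposition~\ref{prop:basic}. The words $\uq, \uq'$ are canonical as well, and both $\wq \aq_1 \uq$ and $\wq \aq_1 \uq'$ are canonical. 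By Proposition~\ref{p:can_injective}, it therefore suffices to show $\varphi(\wq \uq) = \varphi(\wq \uq')$.

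The key step is to use the condition $x a_1 = f$ to determine $\varphi(\wq\uq)$ from $\wq$ alone. By Lemma~\ref{lemma:shorten}~(\ref{lemma:shorten:ii}) applied with $i=1$ and $\uq \in W(\{\aq_2,\dots,\aq_n\})$, we get $a_1 \varphi(\uq) a_1 = \varphi(\uq) a_1$. Hence
\[
x a_1 = \varphi(\wq) a_1 \varphi(\uq) a_1 = \varphi(\wq) \varphi(\uq) a_1 = \varphi(\wq\uq) a_1 .
\]
So $x a_1 = f$ says exactly that $\varphi(\wq\uq) a_1 = f$. Now $\varphi(\wq\uq)$ lies in $\langle a_2, \dots, a_n\rangle$ (or is $e$, in which case $a_1 = f$ and $n = 1$, a trivial case). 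Lemma~\ref{eqlemma} then gives $\varphi(\wq\uq) = e_{\{2,\dots,n\}}$. The same argument applied to $y$ gives $\varphi(\wq\uq') = e_{\{2,\dots,n\}}$. Therefore $\varphi(\wq\uq) = \varphi(\wq\uq')$, Proposition~\ref{p:can_injective} yields $\uq = \uq'$, and so $\zeta(x) = \zeta(y)$. Lemma~\ref{lemma:canonical_unique} then gives $x = y$.

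The main obstacle is the middle step, namely seeing that $x a_1 = f$ collapses to $\varphi(\wq\uq) a_1 = f$, which turns the condition into one Lemma~\ref{eqlemma} can handle. The one-line identity from Lemma~\ref{lemma:shorten}~(\ref{lemma:shorten:ii}) should settle it. Some care is also needed in the degenerate case $n=1$ (and when $\wq\uq$ is empty) so that Lemma~\ref{eqlemma} applies. For $n=1$, $T=\{a_1\}$, so injectivity is immediate there.
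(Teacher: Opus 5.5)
Your proposal is correct and follows essentially the same route as the paper: surjectivity from Proposition~\ref{prop:res}, then $\wq_1 = \wq_2$ from equal images under $\pi$. Next, Lemma~\ref{lemma:shorten}~(\ref{lemma:shorten:ii}) collapses $x a_1 = f$ to $\varphi(\wq\uq) a_1 = f$, Lemma~\ref{eqlemma} forces $\varphi(\wq\uq) = e_{\{2, 3, \dots, n\}}$, and Proposition~\ref{p:can_injective} gives $\uq_1 = \uq_2$. Your extra remarks on the degenerate case $n = 1$ and the empty word are harmless, since the paper's convention includes $e$ in $\langle a_2, a_3, \dots, a_n \rangle$ and Lemma~\ref{eqlemma} already covers that case.
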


\begin{proof}
    By Proposition~\ref{prop:res}, we have that $\pi(T) = \langle a_2, a_3, \dots, a_n \rangle$, and hence surjectivity follows.
    We now show that 
    $\pi|_T$ is an injective map. 

    Let $x_1, x_2 \in T$ and assume $\pi(x_1) = \pi(x_2)$. 
    Since $T \subseteq K_n^1$, we can write 
    \[
    x_1 = \varphi(\wq_1 \aq_1 \uq_1) \quad \text{and} \quad x_2 = \varphi(\wq_2 \aq_1 \uq_2),
    \]
    where $\wq_1, \uq_1, \wq_2, \uq_2 \in W(\{\aq_2, \aq_3, \dots, \aq_n\})$ are canonical words, and $\wq_1 \aq_1 \uq_1$ and 
    $\wq_2 \aq_1 \uq_2$ are the canonical forms of $x_1$ and $x_2$, respectively. 
    Then we have
    \begin{equation*}
        \pi(x_1) = \varphi(\wq_1) \quad \text{and} \quad \pi(x_2) = \varphi(\wq_2).
    \end{equation*} 
    Hence we have $\varphi(\wq_1) = \varphi(\wq_2)$.
    Since $\wq_1$ and $\wq_2$ are both canonical, we obtain $\wq_1 = \wq_2$.
    We now denote $\wq = \wq_1$.
    Since $x_1, x_2 \in T$ and $\uq_1, \uq_2 \in W(\{\aq_2, \aq_3, \dots, \aq_n\})$, we use 
    Lemma~\ref{lemma:shorten}~(\ref{lemma:shorten:ii}) to obtain
    \begin{align}
        x_1 a_1 &= \varphi(\wq \aq_1 \uq_1) a_1 = \varphi(\wq) a_1 \varphi(\uq_1) a_1 = \varphi(\wq) \varphi(\uq_1) a_1 = \varphi(\wq \uq_1) a_1 = f, \label{o1} \\
        x_2 a_1 &= \varphi(\wq \aq_1 \uq_2) a_1 = \varphi(\wq) a_1 \varphi(\uq_2) a_1 = \varphi(\wq) \varphi(\uq_2) a_1 = \varphi(\wq \uq_2) a_1 = f. \label{o2}
    \end{align}
    Since $\wq \uq_1, \wq \uq_2 \in W(\{\aq_2, \aq_3, \dots, \aq_n\})$, 
    we have $\varphi(\wq \uq_1), \varphi(\wq \uq_2) \in \langle a_2, a_3, \dots, a_n \rangle$.
    By Lemma~\ref{eqlemma},~\eqref{o1}, and~\eqref{o2}, it follows that 
    \begin{align*}
        \varphi(\wq \uq_1) &= e_{\{2, 3, \dots, n\}}, \\
        \varphi(\wq \uq_2) &= e_{\{2, 3, \dots, n\}}.
    \end{align*}
    Therefore, $\varphi(\wq \uq_1) = \varphi(\wq \uq_2)$.
    Since $\wq, \uq_1, \uq_2 \in W(\{\aq_2, \aq_3, \dots, \aq_n\})$ are canonical, and $\wq \aq_1 \uq_1$ and $\wq \aq_1 \uq_2$ are also canonical,
    by Proposition~\ref{p:can_injective}, we conclude that $\uq_1 = \uq_2$.
    Hence we have
    \begin{align*} 
        \wq_1 = \wq_2 \quad \text{and} \quad \uq_1 = \uq_2. 
    \end{align*}
    It follows that $x_1 = x_2$. 
    This implies that $\pi|_T$ is injective and thus the proof is complete.
\end{proof}

The following corollary determines the cardinality of the set $T$.
\begin{corollary}\label{bc1}
    We have that $|T| = |K_{n-1}|$, and
    consequently 
    $|R| = 1 + |K_{n-1}|$.
\end{corollary}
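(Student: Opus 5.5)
The plan is to read both counts directly off the bijection of Proposition~\ref{prop:restrict} and the disjoint decomposition~\eqref{R_decomp}. By Proposition~\ref{prop:restrict}, the restriction $\pi|_T : T \to \langle a_2, a_3, \dots, a_n \rangle$ is a bijection. Hence $|T| = |\langle a_2, a_3, \dots, a_n \rangle|$.

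Next, I will identify $\langle a_2, a_3, \dots, a_n \rangle$ with $K_{n-1}$. The paper notes that a subsemigroup generated by $k$ pairwise distinct generators is isomorphic to $K_k$, so $|\langle a_2, \dots, a_n \rangle| = |K_{n-1}|$. If one wants to justify this directly, the natural route uses canonical words. Elements of $\langle a_2, \dots, a_n\rangle$ correspond bijectively to canonical words over $\{\aq_2, \dots, \aq_n\}$, by Theorem~\ref{theorem:1}, Lemma~\ref{lemma:canonical_unique} and Remark~\ref{remark:a_i_occur}; the remark guarantees that the canonical form of a word over these letters uses only these letters. The canonicity condition is purely about relative order of indices, so the relabelling $\aq_i \mapsto \aq_{i-1}$ is a bijection onto canonical words of $K_{n-1}$. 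One subtlety is whether $e$ belongs to the subsemigroup. I will follow the paper's convention that the empty word is allowed, so that $e = \varphi(\eq)$ is counted, consistent with $\pi$ mapping onto it (for example $\pi(a_1 a_2) = e$). Under the same convention, $K_{n-1}$ includes its unit.

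Finally, the decomposition~\eqref{R_decomp} is a disjoint union $R = \{e_{\{2,\dots,n\}}\} \cup T$. Disjointness holds because $e_{\{2,\dots,n\}} \notin K_n^1$, since its canonical form $\eq_{\{2,\dots,n\}}$ does not contain $\aq_1$. Therefore $|R| = 1 + |T| = 1 + |K_{n-1}|$.

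The only step needing care is the identification $|\langle a_2,\dots,a_n\rangle| = |K_{n-1}|$ together with the unit convention; everything else is immediate from the preceding results.
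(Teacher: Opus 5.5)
Your proposal is correct and follows essentially the same route as the paper: compose the bijection $\pi|_T$ from Proposition~\ref{prop:restrict} with the identification $\langle a_2, \dots, a_n \rangle \cong K_{n-1}$ (the paper uses the isomorphism induced by $a_i \mapsto a_{i-1}$) to get $|T| = |K_{n-1}|$, then use the disjoint decomposition~\eqref{R_decomp} to get $|R| = 1 + |K_{n-1}|$. Your optional canonical-word justification of that identification and your convention that $e$ belongs to the subsemigroup are both consistent with how the paper uses these objects.
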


\begin{proof}
    By Proposition~\ref{prop:restrict}, the map 
    $\pi|_T : T \to \langle a_2, a_3, \dots, a_n \rangle$ is bijective.
    The map $\phi : \{a_2, a_3, \dots, a_n\} \to K_{n-1}$ defined 
    by $\phi(a_i) = a_{i-1}$ can be extended to a semigroup isomorphism
    $\widetilde{\phi}$ between $\langle a_2, a_3, \dots, a_n \rangle$ and $K_{n-1}$.
    Therefore, the map 
    $\widetilde{\phi} \circ \pi|_T : T \to K_{n-1}$ is a bijection, and hence $|T| = |K_{n-1}|$.
    By~\eqref{R_decomp}, we have $|R| = 1 + |T| = 1 + |K_{n-1}|$, which completes the proof.
\end{proof}

In what follows, we interpret the set $\{i, i+1, \dots, j\}$ for $j < i$ as the empty set.
Since $x e_{\{1, 2, \dots, n\}} = xf = f$ for any $x \in K_n$, the set 
\begin{equation*}
    \{ i \in \{0, 1, \dots, n\} \mid x e_{\{1, 2, \dots, i\}} = f \}
\end{equation*}
is nonempty for any $x \in K_n$. 
This ensures that the following map is well defined. 
\begin{definition}\label{definition_m}
    We define the map $m : K_n \to \{0, 1, \dots, n\}$ by 
    \begin{equation*}
            m(x) = \min \{ i \in \{0, 1, \dots, n\} \mid x e_{\{1, 2, \dots, i\}} = f \}.
    \end{equation*}
\end{definition}
Observe that $m(x) = 0$ if and only if $x = f$.

\begin{theorem}\label{Theorem_M}
    Let $x \in \langle a_2, a_3, \dots, a_n \rangle$.
    Then we have 
    \[
    \zeta(x a_1 e_{\{2, 3, \dots, m(x)\}}) = \zeta(x) \aq_1 \eq_{\{2, 3, \dots, m(x)\}}.
    \]
\end{theorem}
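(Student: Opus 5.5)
The plan is to show directly that the word $\wq \aq_1 \eq_{\{2, 3, \dots, m\}}$, where $\wq = \zeta(x)$ and $m = m(x)$, is canonical. Since $\varphi(\wq \aq_1 \eq_{\{2, \dots, m\}}) = x a_1 e_{\{2, \dots, m\}}$, Proposition~\ref{prop:basic}~(\ref{prop:basic:ii}) then identifies this word with $\zeta(x a_1 e_{\{2, \dots, m\}})$.

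First the trivial cases. Because $x \in \langle a_2, \dots, a_n \rangle$ and $1 \in c(f)$, we have $x \neq f$, so $m \geq 1$. If $m = 1$, the word is $\wq \aq_1$. This is canonical because $\wq$ is canonical and $\aq_1$ does not occur in $\wq$, exactly as in the proof of Lemma~\ref{eqlemma}. So assume $m \geq 2$.

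For canonicity, consider any subword $\aq_i \sq \aq_i$ of $\wq \aq_1 \eq_{\{2, \dots, m\}}$ and locate the two occurrences of $\aq_i$.
\begin{itemize}
\item Both cannot lie in $\wq$ with the condition failing, since $\wq$ is canonical.
\item Both cannot lie in $\eq_{\{2, \dots, m\}}$, since its letters are distinct.
\item Neither can be $\aq_1$, since $\aq_1$ occurs once.
\end{itemize}
So the left $\aq_i$ lies in $\wq$ and the right one in $\eq_{\{2, \dots, m\}} = \aq_m \aq_{m-1} \cdots \aq_2$, with $2 \leq i \leq m$. Then $\sq$ contains $\aq_1$, and $1 < i$, so the condition "some $\aq_k$ with $k < i$ occurs in $\sq$" holds automatically. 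If $i < m$, then $\sq$ contains $\aq_m$ from the suffix, and $m > i$, so the word is fine. The only possible obstruction is therefore $i = m$, and this happens only when every letter of $\wq$ after its last occurrence of $\aq_m$ is smaller than $m$.

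The main step is to rule out this obstruction using the minimality in Definition~\ref{definition_m}. Suppose $\wq = \wq_1 \aq_m \wq_2$ with $\wq_2 \in W(\{\aq_1, \dots, \aq_{m-1}\})$. Then $x e_{\{1, \dots, m\}} = \varphi(\wq_1) a_m \varphi(\wq_2) a_m e_{\{1, \dots, m-1\}}$. By Lemma~\ref{lemma:shorten}~(\ref{lemma:shorten:i}) we have $a_m \varphi(\wq_2) a_m = a_m \varphi(\wq_2)$, hence
\[
x e_{\{1, \dots, m-1\}} = x e_{\{1, \dots, m\}} = f,
\]
where the last equality is the definition of $m = m(x)$. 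This contradicts the minimality of $m$. Hence $\wq$ must contain, after its last $\aq_m$, some letter larger than $m$, or not contain $\aq_m$ at all; either way the $i = m$ case also satisfies the canonicity condition.

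All subwords $\aq_i \sq \aq_i$ therefore satisfy the condition, so $\wq \aq_1 \eq_{\{2, \dots, m\}}$ is canonical, which proves the theorem. The crux is the observation that the only potential violation is at the letter $\aq_m$, together with translating that violation into the equality $x e_{\{1, \dots, m-1\}} = f$.
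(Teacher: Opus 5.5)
Your proof is correct and follows essentially the same route as the paper. Both arguments establish canonicity of $\zeta(x)\aq_1\eq_{\{2,3,\dots,m(x)\}}$ by locating the two occurrences of $\aq_i$, show that only $i = m(x)$ can fail, and turn that failure into $x e_{\{1,2,\dots,m(x)-1\}} = f$, contradicting the minimality of $m(x)$. The one difference is in deriving the contradiction: you apply Lemma~\ref{lemma:shorten}~(\ref{lemma:shorten:i}) inside $\zeta(x)$ to get, in effect, $x a_{m(x)} = x$, whereas the paper performs the right deletion on the whole word and then moves $a_1$ using Lemma~\ref{lemma:shorten}~(\ref{lemma:shorten:ii}).
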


\begin{proof}
    Let $x \in \langle a_2, a_3, \dots, a_n \rangle$.
    Then $x \neq f$, and hence $m(x) \geq 1$.
    Let $k = m(x)$.
    We first show that $\zeta(x) \aq_1 \eq_{\{2, 3, \dots, k\}}$ is canonical.

    If $k = 1$, then by the definition of $k$, we have $x e_{\{1, 2, \dots, 1\}} = x a_1 = f$, and hence, by Lemma~\ref{eqlemma}, $x = e_{\{2, 3, \dots, n\}}$. Then
    \begin{equation*}
        \zeta(x) \aq_1 \eq_{\{2, 3, \dots, k\}} = \eq_{\{2, 3, \dots, n\}} \aq_1 \eq_{\{2, 3, \dots, 1\}} = \eq_{\{2, 3, \dots, n\}} \aq_1 = \fq.
    \end{equation*}
    Hence $\zeta(x) \aq_1 \eq_{\{2, 3, \dots, k\}}$ is canonical.

    Now assume $k > 1$ and suppose that $\zeta(x) \aq_1 \eq_{\{2, 3, \dots, k\}}$ is not canonical.
    Since $\zeta(x)$ and $\eq_{\{2, 3, \dots, k\}}$ lie in $W(\{\aq_2, \aq_3, \dots, \aq_n\})$ and are both canonical, there exists a subword of $\zeta(x) \aq_1 \eq_{\{2, 3, \dots, k\}}$ 
    of the form $\aq_i \sq \aq_i$, where $i > 1$, the left $\aq_i$ occurs in $\zeta(x)$, the right $\aq_i$ occurs in $\eq_{\{2, 3, \dots, k\}}$, and 
    \begin{equation*}
        \sq \in W(\{\aq_1, \aq_2, \dots, \aq_{i-1}\}) \quad \text{or} \quad \sq \in W(\{\aq_{i+1}, \aq_{i+2}, \dots, \aq_n\}).
    \end{equation*}
    This implies that $\aq_1$ occurs in $\sq$ and $i \in \{2, 3, \dots, k\}$.
    Hence $\sq \in W(\{\aq_1, \aq_2, \dots, \aq_{i-1}\}) \subseteq W(\{\aq_1, \aq_2, \dots, \aq_{k-1}\})$.
    It follows that $\aq_k$ does not occur in $\sq$.
    Hence, the right $\aq_i$ of the subword $\aq_i \sq \aq_i$ does not occur strictly to the right of $\aq_k$ in $\eq_{\{2, 3, \dots, k\}}$, and therefore
    we must have $i = k$.
    This implies that we can use a right deletion to delete the occurrence of the letter $\aq_k$ in the suffix $\eq_{\{2, 3, \dots, k\}}$ of the word $\zeta(x) \aq_1 \eq_{\{2, 3, \dots, k\}}$.
    Thus
    \begin{equation*}
        \zeta(x) \aq_1 \eq_{\{2, 3, \dots, k\}} \to \zeta(x) \aq_1 \eq_{\{2, 3, \dots, k-1\}}.
    \end{equation*}
    This implies
    \begin{equation*}
        x a_1 e_{\{2, 3, \dots, k\}} = x a_1 e_{\{2, 3, \dots, k-1\}}.
    \end{equation*}
    Using Lemma~\ref{lemma:shorten}~(\ref{lemma:shorten:ii}) and the definition of $k = m(x)$, we obtain
    \begin{align*}
        x e_{\{1, 2, \dots, k-1\}} &= x e_{\{2, 3, \dots, k-1\}} a_1 \\
        &= x a_1 e_{\{2, 3, \dots, k-1\}} a_1 \\
        &= x a_1 e_{\{2, 3, \dots, k\}} a_1 \\
        &= x e_{\{2, 3, \dots, k\}} a_1 \\
        &= x e_{\{1, 2, \dots, k\}} \\
        &= f.
    \end{align*}
    This contradicts the definition of $k = m(x)$.
    Hence, the word $\zeta(x) \aq_1 \eq_{\{2, 3, \dots, k\}}$ must be canonical.

    Since $x a_1 e_{\{2, 3, \dots, k\}} = \varphi(\zeta(x) \aq_1 \eq_{\{2, 3, \dots, k\}})$ and $\zeta(x) \aq_1 \eq_{\{2, 3, \dots, k\}}$ is canonical, it follows that
    \[
    \zeta(x a_1 e_{\{2, 3, \dots, k\}}) = \zeta(x) \aq_1 \eq_{\{2, 3, \dots, k\}},
    \]
    which concludes the proof.
\end{proof}

Now we obtain an explicit description of $T$.
\begin{theorem}\label{theorem:T}
    We have
    \begin{equation*}
        T = \{ x a_1 e_{\{2, 3, \dots, m(x)\}} \mid x \in \langle a_2, a_3, \dots, a_n \rangle\}.
    \end{equation*}
\end{theorem}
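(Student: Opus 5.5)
We prove the two inclusions separately, and use the bijection $\pi|_T$ from Proposition~\ref{prop:restrict} to avoid any direct analysis of the suffix of a general element of $T$.

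For the inclusion ``$\supseteq$'', fix $x \in \langle a_2, a_3, \dots, a_n \rangle$ and set $k = m(x) \geq 1$ and $z = x a_1 e_{\{2, 3, \dots, k\}}$. By Theorem~\ref{Theorem_M}, the canonical form of $z$ is $\zeta(x)\aq_1 \eq_{\{2,3,\dots,k\}}$. It contains $\aq_1$, so $z \in K_n^1$, and the definition of $\pi$ gives $\pi(z) = x$. It remains to check that $z a_1 = f$. Since $e_{\{2,3,\dots,k\}}$ lies in $\langle a_2,\dots,a_n\rangle$, Lemma~\ref{lemma:shorten}~(\ref{lemma:shorten:ii}) applied with $i=1$ gives
\[
z a_1 = x a_1 e_{\{2,\dots,k\}} a_1 = x e_{\{2,\dots,k\}} a_1 = x e_{\{1,2,\dots,k\}} = f,
\]
where the last equality is the defining property of $k = m(x)$. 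Hence $z \in T$.

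For the inclusion ``$\subseteq$'', take $t \in T$ and put $x = \pi(t) \in \langle a_2,\dots,a_n\rangle$. By the first part, $z = x a_1 e_{\{2,\dots,m(x)\}}$ also lies in $T$ and satisfies $\pi(z) = x = \pi(t)$. Injectivity of $\pi|_T$ (Proposition~\ref{prop:restrict}) then gives $t = z$, so $t$ belongs to the right-hand side.

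The only step that needs care is the identification $\pi(z) = x$. This is exactly why we invoke Theorem~\ref{Theorem_M}: it shows that the prefix before $\aq_1$ in the canonical form of $z$ is precisely $\zeta(x)$. With that in place, the whole proof reduces to the bijectivity already established in Proposition~\ref{prop:restrict}.
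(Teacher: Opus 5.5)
Your proof is correct and follows the paper's argument. The inclusion $\supseteq$ is proved exactly as in the paper via Theorem~\ref{Theorem_M} and Lemma~\ref{lemma:shorten}~(\ref{lemma:shorten:ii}), and in both the reverse inclusion rests on the bijectivity of $\pi|_T$ from Proposition~\ref{prop:restrict}. The only difference is cosmetic: the paper shows that $x \mapsto x a_1 e_{\{2, 3, \dots, m(x)\}}$ is injective and compares cardinalities using Corollary~\ref{bc1}, whereas you use the injectivity of $\pi|_T$ directly together with $\pi(z) = x$, which is slightly more direct.
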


\begin{proof}
    Let $x \in \langle a_2, a_3, \dots, a_n \rangle$.
    By Theorem~\ref{Theorem_M}, we have
    \[
    \zeta(x a_1 e_{\{2, 3, \dots, m(x)\}}) = \zeta(x) \aq_1 \eq_{\{2, 3, \dots, m(x)\}},
    \]
    which shows $x a_1 e_{\{2, 3, \dots, m(x)\}} \in K_n^1$.
    Since $x \in \langle a_2, a_3, \dots, a_n \rangle$, it follows that $x \neq f$, which implies $m(x) \geq 1$.
    By Lemma~\ref{lemma:shorten}~(\ref{lemma:shorten:ii}), we further have  
    \[
    x a_1 e_{\{2, 3, \dots, m(x)\}} a_1 = x e_{\{2, 3, \dots, m(x)\}} a_1 = x e_{\{1, 2, \dots, m(x)\}} = f,
    \]
    by the definition of $m(x)$.
    This proves 
    \begin{equation}\label{z}
        \{ x a_1 e_{\{2, 3, \dots, m(x)\}} \mid x \in \langle a_2, a_3, \dots, a_n \rangle\} \subseteq T.
    \end{equation}

    Let $x, y \in \langle a_2, a_3, \dots, a_n \rangle$ and assume that $x \neq y$.
    Lemma~\ref{lemma:canonical_unique} implies $\zeta(x) \neq \zeta(y)$.
    Since $\zeta(x), \zeta(y) \in W(\{\aq_2, \aq_3, \dots, \aq_n\})$, it follows that 
    \[
    \zeta(x) \aq_1 \eq_{\{2, 3, \dots, m(x)\}} \neq \zeta(y) \aq_1 \eq_{\{2, 3, \dots, m(y)\}}.
    \]
    Theorem~\ref{Theorem_M} further implies 
    \[
    \zeta(x a_1 e_{\{2, 3, \dots, m(x)\}}) \neq \zeta(y a_1 e_{\{2, 3, \dots, m(y)\}}),
    \]
    and hence we have 
    \[
    x a_1 e_{\{2, 3, \dots, m(x)\}} \neq y a_1 e_{\{2, 3, \dots, m(y)\}}.
    \]
    This implies that
    \[
    |\{ x a_1 e_{\{2, 3, \dots, m(x)\}} \mid x \in \langle a_2, a_3, \dots, a_n \rangle\}| = |\langle a_2, a_3, \dots, a_n \rangle| = |K_{n-1}|.
    \]
    By Corollary~\ref{bc1}, we have $|T| = |K_{n-1}|$ as well.
    Then~\eqref{z} implies 
    \[
    T = \{ x a_1 e_{\{2, 3, \dots, m(x)\}} \mid x \in \langle a_2, a_3, \dots, a_n \rangle\},
    \]
    which completes the proof.
\end{proof}

\begin{theorem}\label{R_rule}
    The set $R$ is a subsemigroup of $K_n$.
    Moreover, for $x, y \in R$, we have 
    \[
    x y = 
    \begin{cases}
        e_{\{2, 3, \dots, n\}}, & x = y = e_{\{2, 3, \dots, n\}}, \\
        x, & x \in T, y = e_{\{2, 3, \dots, n\}}, \\
        f, & y \in T.
    \end{cases}
    \]
\end{theorem}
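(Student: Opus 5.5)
We will verify the multiplication table case by case, using the decomposition~\eqref{R_decomp}, $R = \{e_{\{2,3,\dots,n\}}\} \cup T$. Closure of $R$ under multiplication then follows because every product in the table is $e_{\{2,3,\dots,n\}}$, an element $x \in T$, or $f$. Note that $f \in T$, since $\zeta(f)=\fq$ contains $\aq_1$ and $fa_1=f$. So in every case the product lies in $R$.

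The first case is $x=y=e_{\{2,3,\dots,n\}}$. Here $xy = e_{\{2,3,\dots,n\}}$ because $e_X$ is idempotent (\cite[Proposition~11]{kudryavtseva}).

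The third case is $y \in T$, where $x\in R$ is arbitrary. Theorem~\ref{theorem:T} lets us write $y = z a_1 e_{\{2,\dots,m(z)\}}$ with $z \in \langle a_2,\dots,a_n\rangle$. If $x = e_{\{2,\dots,n\}}$, we use Lemma~\ref{lemma:shorten}~(\ref{lemma:shorten:i}) applied letter by letter: each generator $a_i$ of $z$ with $i\ge 2$ is absorbed by the idempotent, so $e_{\{2,\dots,n\}}z = e_{\{2,\dots,n\}}$. This is the same computation as in the proof of the preceding proposition. Hence $xy = e_{\{2,\dots,n\}}a_1 e_{\{2,\dots,m(z)\}} = f\, e_{\{2,\dots,m(z)\}} = f$.

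If instead $x \in T$, write $x = w a_1 u$, where $w,u$ are the images of the canonical factors $\wq,\uq\in W(\{\aq_2,\dots,\aq_n\})$. From the proof of Proposition~\ref{prop:restrict} we know $\varphi(\wq\uq) = e_{\{2,\dots,n\}}$, that is, $wu = e_{\{2,\dots,n\}}$. Then
\[
xy = w a_1 u z a_1 e_{\{2,\dots,m(z)\}} = w\,(u z)\, a_1 e_{\{2,\dots,m(z)\}},
\]
by Lemma~\ref{lemma:shorten}~(\ref{lemma:shorten:ii}), since $uz \in W(\{\aq_2,\dots,\aq_n\})$ sits between two copies of $a_1$. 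This equals $e_{\{2,\dots,n\}}\, z\, a_1 e_{\{2,\dots,m(z)\}}$, and by the absorption just used it reduces to $f$.

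A cleaner uniform argument covers both subcases at once. Every $x\in R$ satisfies $x a_1 = f$, so for $y = z a_1 e$ we could try to move $a_1$ leftward past $z$. That does not work directly, so the computation above is the intended route. The step needing care is the identity $wu = e_{\{2,\dots,n\}}$ for $x\in T$, which was established inside the proof of Proposition~\ref{prop:restrict} via Lemma~\ref{eqlemma}.

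The remaining case is $x\in T$ and $y = e_{\{2,\dots,n\}}$, where we must show $x e_{\{2,\dots,n\}} = x$. This is the main obstacle. Write $x = z a_1 e_{\{2,\dots,k\}}$ with $k=m(z)$, so that $x e_{\{2,\dots,n\}} = z a_1 e_{\{2,\dots,k\}} e_{\{2,\dots,n\}}$. Consider the letters $a_j$ of $e_{\{2,\dots,n\}} = a_n\cdots a_2$. For $j\le k$, the letter $a_j$ already appears in $e_{\{2,\dots,k\}}$ with only smaller-indexed letters in between, so a right deletion removes it.

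The letters $a_j$ with $j>k$ are the delicate part. For these we plan to use the minimality property $z e_{\{1,\dots,k\}} = f$, which gives $z a_1 e_{\{2,\dots,k\}} a_1 = f$. The idea is to show that in the canonical word $\zeta(z)\aq_1\eq_{\{2,\dots,k\}}\aq_n\cdots\aq_{k+1}$ each appended letter $\aq_j$ can be deleted. Concretely, we would show that $\zeta(z)$ contains $\aq_j$, and that the segment after its last occurrence lies in letters below $j$, so that a right deletion applies. This should follow from $\can(\zeta(z)\aq_1)$-type reasoning: $z e_{\{2,\dots,k\}} a_1 = f$ forces, via Theorem~\ref{main} and Lemma~\ref{lc0}, that $\zeta(z)$ already has the suffix structure required. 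We expect this to require an inductive argument on $j=n,n-1,\dots,k+1$.

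Alternatively, we would try the product formula $x e_{\{2,\dots,n\}} \in R$ together with injectivity of $\pi|_T$. The product lies in $T$, since it contains $a_1$ and multiplying by $a_1$ gives $x e_{\{2,\dots,n\}} a_1 = x f = f$. Its $\pi$-value is $\varphi(\zeta(z))$ by Lemma~\ref{lc0}, because the prefix before $\aq_1$ is unchanged. Proposition~\ref{prop:restrict} then yields $x e_{\{2,\dots,n\}} = x$. This second route avoids the induction entirely, and we would adopt it.
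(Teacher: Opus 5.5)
Your proof is correct. In the decisive case $x\in T$, $y=e_{\{2,\dots,n\}}$, the route you adopt is essentially the paper's. Lemma~\ref{lc0} shows that $xy\in T$ and that $\zeta(xy)$ has the same prefix before $\aq_1$ as $\zeta(x)$, and injectivity then gives $xy=x$. The paper phrases this last step through Theorems~\ref{theorem:T} and~\ref{Theorem_M}, comparing $\zeta(z)=\zeta(z')$. You instead cite injectivity of $\pi|_T$ from Proposition~\ref{prop:restrict} directly, which is slightly leaner; you could even write $\zeta(x)=\wq\aq_1\uq$ and skip Theorem~\ref{theorem:T} altogether.

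The real difference is the case $y\in T$. Your split into $x=e_{\{2,\dots,n\}}$ and $x\in T$, using $wu=e_{\{2,\dots,n\}}$ from the proof of Proposition~\ref{prop:restrict}, is valid. However, your claim that the uniform argument ``does not work directly'' is wrong. Lemma~\ref{lemma:shorten}~(\ref{lemma:shorten:ii}), read as an insertion, gives $za_1=a_1za_1$ for $z\in\langle a_2,\dots,a_n\rangle$. Hence $xy=xa_1za_1e_{\{2,\dots,m(z)\}}=f$ for every $x\in R$. That one line is the paper's proof and makes your subcases unnecessary. Similarly, closure is simply $(xy)a_1=x(ya_1)=xf=f$, which does not depend on first establishing the table.

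In your abandoned first sketch, the claim that only smaller letters separate the two copies of $\aq_j$, $j\le k$, is false for $j<n$, since $\aq_n,\dots,\aq_{j+1}$ also intervene. Dropping that route was the right call.
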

\begin{proof}
    Let $x, y \in R$. Then 
    \[
    (x y) a_1 = x y a_1 = x f = f,
    \]
    which shows $x y \in R$, and hence $R$ is a subsemigroup of $K_n$.

    If $x = y = e_{\{2, 3, \dots, n\}}$, then 
    \[
    x y = e_{\{2, 3, \dots, n\}} e_{\{2, 3, \dots, n\}} = e_{\{2, 3, \dots, n\}},
    \]
    since $e_{\{2, 3, \dots, n\}}$ is an idempotent in $K_n$.

    If $x \in T$ and $y = e_{\{2, 3, \dots, n\}}$, then by Theorem~\ref{theorem:T}, we have
    \begin{equation}\label{x1}
        x = z a_1 e_{\{2, 3, \dots, m(z)\}},
    \end{equation}
    for some $z \in \langle a_2, a_3, \dots, a_n \rangle$.
    Therefore,
    \[
    xy = z a_1 e_{\{2, 3, \dots, m(z)\}} e_{\{2, 3, \dots, n\}} = z a_1 e_{\{2, 3, \dots, n\}}.
    \]
    Since 
    \[
    z a_1 e_{\{2, 3, \dots, n\}} = \varphi(\zeta(z) \aq_1 \eq_{\{2, 3, \dots, n\}}),
    \]
    $\zeta(z), \eq_{\{2, 3, \dots, n\}} \in W(\{\aq_2, \aq_3, \dots, \aq_n\})$, and $\zeta(z)$ is canonical,
    Lemma~\ref{lc0} implies 
    \begin{equation}\label{z1}
        \zeta(xy) = \zeta(z a_1 e_{\{2, 3, \dots, n\}}) = \can(\zeta(z) \aq_1 \eq_{\{2, 3, \dots, n\}}) = \zeta(z) \aq_1 \uq^*,
    \end{equation}
    for some $\uq^* \leq \eq_{\{2, 3, \dots, n\}}$.
    By~\eqref{z1}, we have $1 \in c(x y)$.
    Therefore $x y \in K_n^1$ and since $x y \in R$, we must have $x y \in T$.
    By Theorem~\ref{theorem:T}, we further have
    \begin{equation}\label{x2}
        x y = z' a_1 e_{\{2, 3, \dots, m(z')\}},
    \end{equation}
    for some $z' \in \langle a_2, a_3, \dots, a_n \rangle$.
    By Theorem~\ref{Theorem_M}, we have 
    \begin{equation}\label{z2}
        \zeta(xy) = \zeta(z' a_1 e_{\{2, 3, \dots, m(z')\}}) = \zeta(z') \aq_1 \eq_{\{2, 3, \dots, m(z')\}}.
    \end{equation}
    Comparing~\eqref{z1} and~\eqref{z2}, and noting that $\zeta(z), \zeta(z') \in W(\{\aq_2, \aq_3, \dots, \aq_n\})$, it follows that 
    \[
    \zeta(z) = \zeta(z'),
    \]
    which implies $z = z'$ by Lemma~\ref{lemma:canonical_unique}.
    By~\eqref{x1} and~\eqref{x2}, it then follows that
    \[
    xy = x.
    \]

    If $y \in T$, then by Theorem~\ref{theorem:T}, we have 
    \[
    y = z a_1 e_{\{2, 3, \dots, m(z)\}},
    \]
    for some $z \in \langle a_2, a_3, \dots, a_n \rangle$.
    Then by Lemma~\ref{lemma:shorten}~(\ref{lemma:shorten:ii}), we have
    \[
    xy = x z a_1 e_{\{2, 3, \dots, m(z)\}} = x a_1 z a_1 e_{\{2, 3, \dots, m(z)\}} = f z a_1 e_{\{2, 3, \dots, m(z)\}} = f.
    \]
    This concludes the proof.
\end{proof}

Observe that the cases for the multiplication rule in $R$ in Theorem~\ref{R_rule} are exhaustive by~\eqref{R_decomp}.

\section{Parity of $|K_n|$}\label{PAR}

In Section~\ref{TE}, we used a simple decomposition of $K_n$
\begin{equation}\label{kn_decomp}
    K_n = \langle a_2, a_3, \dots, a_n \rangle \cup K_n^1,
\end{equation}
where $K_n^1$ is the set of all $x \in K_n$ whose canonical form contains $\aq_1$.
This idea leads us to the following result. 
\begin{theorem}\label{Parity:Theorem}
    For any $k \geq 1$, $|K_{2k+1}|$ is even and $|K_{2k}|$ is odd. 
\end{theorem}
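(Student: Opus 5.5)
The plan is an induction that steps from $n-2$ to $n$, starting from the small values $|K_1| = 2$ (the elements $e, a_1$) and $|K_2| = 5$ (the elements $e, a_1, a_2, a_1a_2, a_2a_1$). So it suffices to prove the congruence
\[
|K_n| \equiv |K_{n-2}| \pmod 2 \qquad (n \geq 3).
\]
I will sort $K_n$ according to whether the content contains $1$, contains $n$, contains both, or contains neither. Write $K_n^n = \{x \in K_n \mid n \in c(x)\}$.

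By the decomposition~\eqref{kn_decomp} applied at both ends, an element $x$ lies outside $K_n^1 \cup K_n^n$ exactly when $c(x) \subseteq \{2, \dots, n-1\}$. Such elements form the subsemigroup $\langle a_2, \dots, a_{n-1}\rangle \cong K_{n-2}$.

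The antiautomorphism $\tau$ maps $a_i$ to $a_{n-i+1}$, so $c(\tau(x))$ is the image of $c(x)$ under $i \mapsto n-i+1$. Hence $\tau$ restricts to a bijection $K_n^1 \to K_n^n$, and so $|K_n^1| = |K_n^n|$. Inclusion--exclusion then gives
\[
|K_n| = 2|K_n^1| - |K_n^1 \cap K_n^n| + |K_{n-2}|.
\]
Therefore the statement reduces to showing that $I_n := K_n^1 \cap K_n^n$ has even cardinality for every $n \geq 2$. For $n = 2$ this is the pair $\{a_1a_2, a_2a_1\}$.

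To show $|I_n|$ is even, I would build a fixed-point-free involution $\sigma$ on $I_n$. By Proposition~\ref{prop:word_bound}, the canonical form of $x \in I_n$ has the shape $\wq \aq_1 \uq$ with $\wq, \uq \in W(\{\aq_2,\dots,\aq_n\})$, and by Proposition~\ref{prop:word_bound}~(\ref{prop:word_bound:ii}) the letter $\aq_n$ occurs exactly once. That single occurrence lies either in $\wq$ or in $\uq$.

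The guiding example is $n=2$, where $\sigma$ swaps $a_1a_2$ and $a_2a_1$. In general, $\sigma$ should move $\aq_n$ across $\aq_1$ while leaving everything else as unchanged as canonicity permits. The natural tools are the following.
\begin{itemize}
\item Lemma~\ref{lc0} and Proposition~\ref{p:can_injective}, which control canonical forms of words $\wq \aq_1 \uq$.
\item Their mirror images under $\tau$, which control the position of $\aq_n$.
\item Lemma~\ref{lemma:shorten}, which shows that $a_n$ absorbs letters on its left and $a_1$ absorbs letters on its right. This should be what makes ``moving $\aq_n$ across $\aq_1$'' well defined on elements rather than just on words.
\end{itemize}

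A fallback, if a direct involution proves elusive, is to compute $|I_n|$ modulo $2$ by fibering over $\pi$ (Definition~\ref{def:pi}). For a fixed prefix $\wq$, I would count the suffixes $\uq$ with $\wq\aq_1\uq$ canonical and containing $\aq_n$, and show that these fibres pair up, again using $\tau$ to exchange the roles of $\aq_1$ and $\aq_n$.

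\textbf{Main obstacle.} The reduction to $I_n$ is routine. The hard step is defining the swap $\sigma$ rigorously. When $\aq_n$ moves across $\aq_1$ it may trigger new deletions, so I must show three things: the canonical form of the new word still contains both $\aq_1$ and $\aq_n$, the map is injective, and it is an involution with no fixed points.
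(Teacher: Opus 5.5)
You reduce correctly to showing that $|I_n|$ is even, and this reduction is essentially the paper's: the paper splits $K_n$ into the same four content classes. It matches the two one-sided classes by $\varphi\circ t\circ\zeta$ rather than by $\tau$, but $\tau$ works equally well. The evenness of $|I_n|$, however, is the whole content of the theorem, and your proposal does not prove it. There is a genuine gap.

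The most direct reading of your sketch fails already for $n=4$. The proposed swap exchanges the positions of $\aq_1$ and $\aq_n$, keeps the rest fixed, and then reduces. Take the distinct canonical words $\aq_2\aq_1\aq_3\aq_2\aq_4$ and $\aq_1\aq_3\aq_2\aq_4$. Both become $\aq_4\aq_3\aq_2\aq_1=\fq$. In the first case the swap produces $\aq_2\aq_4\aq_3\aq_2\aq_1$, and a left deletion on $\aq_2(\aq_4\aq_3)\aq_2$ gives $\fq$. So this map is not even injective.

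Nor can $\tau$ supply the pairing. On canonical forms it acts as word reversal composed with the relabelling $\aq_i\mapsto\aq_{n-i+1}$. That composite keeps $\aq_1$ to the left of $\aq_n$ whenever it was there. The tools you list (Lemma~\ref{lc0}, Proposition~\ref{p:can_injective}, Lemma~\ref{lemma:shorten}) all operate at the level of elements, where the deletion problems you worry about are real.

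The missing idea is to work purely with canonical words. You need a symmetry of the canonicity condition that is not induced by any (anti)automorphism of $K_n$. The condition ``between two occurrences of $\aq_i$ there is both a larger and a smaller letter'' is invariant under the relabelling $t\colon\aq_i\mapsto\aq_{n-i+1}$ applied without reversing the word. It is also invariant under reversal alone.

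By Theorem~\ref{theorem:1} and Proposition~\ref{prop:word_bound}, $I_n$ corresponds bijectively to the canonical words containing exactly one $\aq_1$ and exactly one $\aq_n$. The map $t$ sends those with $\aq_1$ before $\aq_n$ onto those with $\aq_n$ before $\aq_1$, and vice versa. Since $t\circ t$ is the identity, this is a fixed-point-free involution, so $|I_n|$ is even. Because $t$ maps canonical words to canonical words, no deletions are ever triggered, and the three difficulties you list do not arise.
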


Let $S \subseteq K_n$ and let $X \subseteq \{1, 2, \dots, n\}$.
Then we define
\begin{equation*}
    S^X = \{x \in S \mid X \subseteq c(x)\}.
\end{equation*}
Observe that $K_n^1 = K_n^{\{1\}}$.

We also define the map $t : W(A) \to W(A)$ by
\begin{equation*}
    t(\aq_{i_1} \aq_{i_2} \dots \aq_{i_k}) = \aq_{n-i_1 +1} \aq_{n-i_2+1} \dots \aq_{n-i_k+1},
\end{equation*}
for all $k \geq 1$ and $i_1, i_2, \dots, i_k \in \{1, 2, \dots, n\}$, and $t(\eq) = \eq$.
Then $t(t(\wq)) = \wq$ for every $\wq \in W(A)$, and hence $t$ is a bijection with $t^{-1} = t$.

\begin{lemma}\label{t:lemma}
    Let $\wq \in W(A)$. Then $\wq$ is canonical if and only if $t(\wq)$ is canonical.
\end{lemma}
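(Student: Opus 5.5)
We argue directly from the definition of canonicity; this avoids the antiautomorphism $\tau$, whose relation to $t$ involves a reversal of the word as well. Note that $t$ maps letters to letters, so for a word $\wq = \aq_{i_1}\dots\aq_{i_k}$ the subwords of $t(\wq)$ are exactly the images $t(\vq)$ of the subwords $\vq$ of $\wq$. In particular, $\aq_i \uq \aq_i$ is a subword of $\wq$ if and only if $\aq_{n-i+1}\, t(\uq)\, \aq_{n-i+1}$ is a subword of $t(\wq)$.

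Since $t$ is an involution, it suffices to prove one implication: if $\wq$ is canonical, then $t(\wq)$ is canonical. Applying this to $t(\wq)$ then gives the converse.

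\begin{itemize}
\item Assume $\wq$ is canonical, and let $\aq_{i'} \uq' \aq_{i'}$ be a subword of $t(\wq)$.
\item Set $i = n - i' + 1$ and $\uq = t(\uq')$. By the observation above, $\aq_i \uq \aq_i$ is a subword of $\wq$.
\item Canonicity of $\wq$ gives $j > i$ and $k < i$ such that $\aq_j$ and $\aq_k$ occur in $\uq$.
\item Then $\aq_{n-j+1}$ and $\aq_{n-k+1}$ occur in $t(\uq) = \uq'$, and we have $n-j+1 < n-i+1 = i'$ and $n-k+1 > i'$.
\end{itemize}

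So the letter $\aq_{n-k+1}$, with index larger than $i'$, and the letter $\aq_{n-j+1}$, with index smaller than $i'$, both occur in $\uq'$. This is exactly the canonicity condition for the subword $\aq_{i'}\uq'\aq_{i'}$, so $t(\wq)$ is canonical.

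The only point requiring care is the correspondence of subwords under $t$ together with the reversal of the inequalities. The map $i \mapsto n-i+1$ is an order-reversing bijection of $\{1,\dots,n\}$, so the two roles in the definition, "some letter above $i$" and "some letter below $i$", are simply exchanged. Because the definition of a canonical word is symmetric in these two roles, the condition is preserved.
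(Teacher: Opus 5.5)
Your proof is correct and is essentially the paper's argument. Both pull back a subword $\aq_{i'}\uq'\aq_{i'}$ of $t(\wq)$ to the subword $\aq_{n-i'+1}\,t(\uq')\,\aq_{n-i'+1}$ of $\wq$, apply canonicity of $\wq$ together with the order-reversing map $i \mapsto n-i+1$, and get the converse from $t$ being an involution; your explicit remark that the letterwise map $t$ preserves contiguous subwords states a step the paper leaves implicit.
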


\begin{proof}
    Assume that $\wq$ is canonical.
    Suppose there exists a subword of the form $\aq_i \sq \aq_i$ in $t(\wq)$.
    Then
    \[
    t^{-1}(\aq_i \sq \aq_i) = t(\aq_i \sq \aq_i) = \aq_{n-i+1} t(\sq) \aq_{n-i+1}
    \]
    is a subword of $\wq$.
    Since $\wq$ is canonical, there exist $k$ and $j$ with $k > n-i+1$ and $j < n-i+1$ such that $\aq_k$ and $\aq_j$ occur in $t(\sq)$.
    It follows that $t^{-1}(\aq_k) = t(\aq_k) = \aq_{n-k+1}$ and $t^{-1}(\aq_j) = t(\aq_j) = \aq_{n-j+1}$ occur in $\sq$.
    Since $k > n-i+1$, we have $n-k+1<i$, and since $j<n-i+1$, we have $n-j+1>i$.
    Hence $t(\wq)$ is canonical.

    Conversely, if $t(\wq)$ is canonical, then applying the above argument to $t(\wq)$ yields that $t(t(\wq)) = \wq$ is canonical.
\end{proof}

\begin{proof}[Proof of Theorem~\ref{Parity:Theorem}]
    Assume that $n \geq 3$.
    We decompose $K_n$ as
    \begin{equation*}
        K_n = \langle a_2, a_3, \dots, a_{n-1} \rangle \cup \langle a_1, a_2, \dots, a_{n-1} \rangle^{\{1\}} 
        \cup \langle a_2, a_3, \dots, a_n \rangle^{\{n\}} \cup \langle a_1, a_2, \dots, a_n \rangle^{\{1, n\}},
    \end{equation*}
    where the union is disjoint.
    To justify this, observe that for every $x \in K_n$, its canonical form 
    $\zeta(x)$ either lies in $W(\{\aq_2, \aq_3, \dots, \aq_{n-1}\})$, 
    in which case $x \in \langle a_2, a_3, \dots, a_{n-1} \rangle$, or it contains at least one of the letters $\aq_1$ or $\aq_n$.

    If it contains $\aq_1$ but not $\aq_n$, then 
    $x \in \langle a_1, a_2, \dots, a_{n-1} \rangle^{\{1\}}$.
    If it contains $\aq_n$ but not $\aq_1$, then 
    $x \in \langle a_2, a_3, \dots, a_n \rangle^{\{n\}}$.
    Finally, if it contains both $\aq_1$ and $\aq_n$, then 
    $x \in \langle a_1, a_2, \dots, a_n \rangle^{\{1, n\}}$.

    The semigroup $\langle a_2, a_3, \dots, a_{n-1} \rangle$ is in bijection with $K_{n-2}$.
    Moreover, $\langle a_1, \dots, a_{n-1} \rangle^{\{1\}}$ and $\langle a_2, \dots, a_n \rangle^{\{n\}}$ are also in bijection.
    A corresponding bijection is $\varphi \circ t \circ \zeta$.
    Since $\langle a_2, \dots, a_n \rangle$ is in bijection with $K_{n-1}$, using~\eqref{kn_decomp} we obtain
    \begin{equation*}
        |K_n^1| = |K_n| - |\langle a_2, a_3, \dots, a_n \rangle| =  |K_n| - |K_{n-1}|,
    \end{equation*}
    and therefore 
    \begin{equation*}
       |\langle a_1, a_2, \dots, a_{n-1} \rangle^{\{1\}}| = |K_{n-1}^1| = |K_{n-1}| - |K_{n-2}|.
    \end{equation*}
    Hence we have
    \begin{align*}
        |K_n| &= |\langle a_2, a_3, \dots, a_{n-1} \rangle| + |\langle a_1, a_2, \dots, a_{n-1} \rangle^{\{1\}}| \\
        &\quad + |\langle a_2, a_3, \dots, a_n \rangle^{\{n\}}| + |\langle a_1, a_2, \dots, a_n \rangle^{\{1, n\}}| \\
        &= |K_{n-2}| + 2 |\langle a_1, a_2, \dots, a_{n-1} \rangle^{\{1\}}| + |\langle a_1, a_2, \dots, a_n \rangle^{\{1, n\}}| \\
        &= |K_{n-2}| + 2 (|K_{n-1}| - |K_{n-2}|) + |\langle a_1, a_2, \dots, a_n \rangle^{\{1, n\}}|.
    \end{align*}

    We identify $\langle a_1, a_2, \dots, a_n \rangle^{\{1, n\}}$ with the set of canonical forms of the elements of $\langle a_1, a_2, \dots, a_n \rangle^{\{1, n\}}$, that is, with the set of all canonical words over $A$ containing the letters $\aq_1$ and $\aq_n$. 
    We denote this set by $V$.
    By Proposition~\ref{prop:word_bound}, $\aq_1$ and $\aq_n$ occur exactly once in each element of $V$.
    Thus, we can write
    \begin{align}
        V = &\{\uq \aq_1 \vq \aq_n \wq \mid \uq, \vq, \wq \in W(\{\aq_2, \aq_3, \dots, \aq_{n-1}\}), \uq \aq_1 \vq \aq_n \wq \text{ is canonical}\} \cup 
        \label{v1} \\
        &\{\uq \aq_n \vq \aq_1 \wq \mid \uq, \vq, \wq \in W(\{\aq_2, \aq_3, \dots, \aq_{n-1}\}), \uq \aq_n \vq \aq_1 \wq \text{ is canonical}\},
        \label{v2}
    \end{align}
    where the union is disjoint.
    Denote by $V_1$ and $V_2$ the sets in~\eqref{v1} and~\eqref{v2}, respectively.

    We show that the restriction $t|_{V_1} : V_1 \to V_2$ is a bijection.
    If $\uq \aq_1 \vq \aq_n \wq \in V_1$, then $t(\uq \aq_1 \vq \aq_n \wq)$ is canonical by Lemma~\ref{t:lemma}.
    Moreover, we have 
    \begin{equation*}
        t(\uq \aq_1 \vq \aq_n \wq) = t(\uq) \aq_n t(\vq) \aq_1 t(\wq)
    \end{equation*}
    and $t(\uq), t(\vq), t(\wq) \in W(\{\aq_2, \dots, \aq_{n-1}\})$.
    Hence $t|_{V_1}$ maps into $V_2$.
    Since $t$ is injective, so is $t|_{V_1}$. 
    Let $\uq \aq_n \vq \aq_1 \wq \in V_2$.
    Then
    \begin{equation*}
        t(\uq \aq_n \vq \aq_1 \wq) = t(\uq) \aq_1 t(\vq) \aq_n t(\wq)
    \end{equation*}
    is canonical by Lemma~\ref{t:lemma}, and since $t(\uq), t(\vq), t(\wq) \in W(\{\aq_2, \dots, \aq_{n-1}\})$, we have
    $t(\uq \aq_n \vq \aq_1 \wq) \in V_1$.
    Since $\uq \aq_n \vq \aq_1 \wq = t(t(\uq \aq_n \vq \aq_1 \wq))$, surjectivity follows.
    Hence $|V_1| = |V_2|$ and 
    \begin{equation*}
       |\langle a_1, a_2, \dots, a_n \rangle^{\{1, n\}}| = |V| = 2|V_1|. 
    \end{equation*}
    
    Thus,
    \begin{equation*}
        |K_n| = |K_{n-2}| + 2 (|K_{n-1}| - |K_{n-2}|) + 2|V_1|.
    \end{equation*}
    It follows that 
    \begin{equation*}
        2 \mid |K_n| - |K_{n-2}|.
    \end{equation*}
    Therefore,
    \[
    |K_n| \equiv |K_{n-2}| \pmod{2}.
    \]

    Since $K_1 = \{e, a_1\}$, we have $|K_1| = 2$.
    Therefore, $|K_{2k+1}|$ is even.
    We have $K_2 = \{e, a_1, a_2, a_1 a_2, a_2 a_1\}$, hence $|K_2| = 5$.
    It follows that $|K_{2k}|$ is odd. 
\end{proof}

\bmhead{Acknowledgements}

I would like to thank my close family and friends.
This work would not have been possible without your unconditional support, relentless motivation, and loving relationships.
Thank you.

\bibliography{sn-bibliography.bib}

\end{document}